\def\bN{\mathbb{N}}
\def\bR{\mathbb{R}}
\def\bS{\mathbb{S}}
\def\cA{\mathcal{A}}
\def\cB{\mathcal{B}}
\def\cF{\mathcal{F}}
\def\cE{\mathcal{E}}
\def\cP{\mathcal{P}}
\def\cK{\mathcal{K}_{\mathcal{F}}}
\def\cV{\mathcal{V}_{\mathcal{F}}}
\def\cS{\mathcal{S}_{\nu}}
\def\homS{\mbox{Hom}(\bS^1 )}
\def\CO{\mbox{CO}}
\newtheorem{theorem}{Theorem}
\newtheorem{corollary}{Corollary}
\newtheorem{proposition}{Proposition}
\newtheorem{lemma}{Lemma}
\newtheorem{example}{Example}
\newtheorem{remark}{Remark}
\DeclareMathSymbol{\emptyset}{\mathord}{AMSb}{"3F}
\title[RDS on the circle without a finite orbit]{ Random Dynamical Systems on the circle without a finite orbit}
\author[D. Malicet]{Dominique Malicet}
\address{Université Gustave Eiffel, Laboratoire d’analyse et de mathématiques appliquées (LAMA), 5 Bd Descartes, 77420 Champs-sur-Marne, Francia}
\email{dominique.malicet@univ-eiffel.fr}
\author[G. Salcedo]{Graccyela Salcedo}
\address{Centre de Physique Théorique, CNRS, Institut Polytechnique de Paris, Palaiseau, France.}
\email{graccyela.salcedo@polytechnique.edu}
\begin{document}

\begin{abstract}
In this paper, we study Random Dynamical Systems (RDSs) of homeomorphisms on the circle without a finite orbit. We characterize the topological dynamics of the associated semigroup by identifying the existence of invariant sets which are finite unions of intervals. We describe the accumulation points of the average orbit of the transfer operator. For each ergodic stationary measure, we demonstrate interesting properties of its weight function on the circle. Relationships between the minimal sets of an RDS and its inverse RDS are also established.
\end{abstract}

\begin{thanks}
{This study was financed by the grant 2022/10341-9 Sao Paulo Research Foundation (FAPESP); and the Center of Excellence “Dynamics, Mathematical Analysis and Artificial Intelligence” at Nicolaus Copernicus University in Toruń.}
\end{thanks}

\keywords{random dynamical systems, local contraction, minimality, proximality, stationary measure}

\maketitle

%-----------------------------------------
\section{Introduction}

Given a set $\cF\subset\homS$, we study the \emph{Random Dynamical System (RDS)} associated to $\cF$ and the action on the circle of the semigroup generated by $\cF$. The action of the subgroup generated by $\cF$ has been extensively studied, see for example \cite{Anton:1984}, \cite{DerKleNav:07}, and \cite{Nav:11}. Here we assume that there is no finite set invariant under all maps in $\cF$. By \cite{Mal:17}, there exists a number $d=d_\cF\in\bN$ such that
\begin{itemize}
    \item the semigroup $G_\cF$ generated by $\cF$ has $d$ closed minimal sets, and
    \item each RDS associated to $\cF$ has $d$ ergodic stationary measures.
\end{itemize}
This is an immediate consequence of the trichotomy established in \cite[Corollary 2.2]{Mal:17}. Since we are assuming the absence of invariant finite sets, i.e., the absence of finite $G_\cF$-orbits; we have that one (and only one) of the following possibilities
occurs:
\begin{enumerate}
    \item[(i)] The semigroup $G_\cF$ does not preserve a probability measure. Hence, each RDS associated to $\cF$ has the local contraction property. By \cite[Proposition 4.8]{Mal:17}, has a finite number $d$ of ergodic stationary probability measures. Their respective topological supports are pairwise disjoints and exactly the minimal invariant compacts of $G_\cF$.
    \item[(ii)] The semigroup $G_\cF$ is semiconjugated\footnote{See \cite[Definition 2.1.3]{Nav:11}, for a definition of semiconjugation between semigroups.} to a semigroup of the compact group of the isometries of the circle acting minimally on $\bS^1$. Therefore, the semigroup $G_\cF$ has a unique closed minimal set, and any RDS associated to $\cF$ has a unique stationary measure, which is the invariant one.
\end{enumerate}

A stationary measure for an RDS reflects the long-term distribution of the orbits under the random system, capturing the notion of probabilistic equilibrium. Due to the compactness of $\bS^1$ and the continuity of the maps in the RDS, we have that for any initial point $x\in\bS^1$, the average of the transfer operator in the orbit of $x$ converges to a stationary measure $m_x$ \cite{Fur:63}.
We describe the set $\{m_x\colon x\in\bS^1\}$, i.e., the stationary measures that can be generated as the limit of an orbit of the transfer operator. According to the classical Birkhoff's Theorem, each ergodic stationary measure is generated by at least one orbit. The continuity of the map $x\mapsto m_x$ was already established in \cite{Mal:17}. Here, we characterize this map on the circle beyond its continuity. To do this, we use the topological structure of $\bS^1$ and show the existence of finite families of closed intervals, such that the union over any one of these families is invariant under every map in $\cF$. The existence of a family of intervals with that invariance property was established for $\cF\subset\text{Hom}^+(\bS^1)$ finite and $G_\cF$ having at least two closed minimal sets, see \cite[Theorem 1.2]{2018:KlepKudOku}.

To introduce our results, we first formalize the topological concepts mentioned above.
For $\cF\subset\bS^1$, let $G_\cF$ be the semigroup generated by $\cF$, i.e., $G_\cF$ is the smallest semigroup of $\homS$ containing $\cF$. Here, we assume that $G_\cF$ has no finite orbit, i.e., for every $x\in\bS^1$ the set
\[
\mathcal{O}_\cF(x)= \{f(x)\colon f\in G_\cF\}
\]
is not finite. A nonempty set $K\subset\bS^1$ is called \emph{$\cF$-invariant} if $f(K)\subset K$ for all $f\in\cF$. A nonempty closed set $K\subset\bS^1$ is called \emph{$\cF$-invariant minimal} if it is $\cF$-invariant and there is not a nonempty closed set $K'\subsetneq K$ which is $\cF$-invariant. 

\begin{theorem}\label{newteo:1}
   Let $\mathcal{F}\subset Hom(S^1)$ such that the generated semigroup
$G_{\mathcal{F}}$  has no finite orbit.  
Let $K_1$,...,$K_d$ be the
invariant minimal sets. Let us assume that $d\geq 2$. Then there
exist subsets $L_1,\ldots, L_d$ of $\bS^1$ such that
\begin{itemize}
    \item $L_i$ is a finite union of closed intervals with extremities in $K_i$,
    \item $L_i$ is $\cF$-invariant and contains $K_i$,
    \item the action of $G_{\mathcal{F}}$ on each connected component is proximal, and
    \item the sets $L_i$ are pairwise disjoint.
\end{itemize}
Moreover, if $\mathcal{F}\subset Hom^+(S^1)$, the sets $L_i$ has the
same number of connected components.
 \end{theorem}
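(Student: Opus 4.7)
The plan is to define each $L_i$ as the union of $K_i$ with the \emph{good} gaps of $K_i$---those connected components of $\mathbb{S}^1\setminus K_i$ disjoint from every other minimal set. Since $d\geq 2$, the dichotomy in the introduction places us in case (i), so each $K_i$ is a minimal Cantor set and the RDS has the local contraction property; in particular the $K_i$'s are pairwise disjoint compacta.

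The construction rests on a finiteness observation: for $j\neq i$, $K_j$ meets only finitely many gaps of $K_i$. Indeed, infinitely many gaps hit by $K_j$ would have lengths tending to zero, and the $K_i$-endpoints together with the $K_j$-points inside would share an accumulation point, contradicting $K_i\cap K_j=\emptyset$. Summing over $j\neq i$, only finitely many gaps of $K_i$---call them \emph{bad}---meet $\bigcup_{j\neq i}K_j$. Setting $L_i=K_i\cup\bigcup\{\text{good gaps of }K_i\}=\mathbb{S}^1\setminus\bigcup\{\text{bad gaps of }K_i\}$ makes $L_i$ a finite union of closed arcs whose extremities (being endpoints of bad gaps) lie in $K_i$, and $L_i\supset K_i$. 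Pairwise disjointness follows from a nesting argument: a good gap $(a,b)$ of $K_i$ is disjoint from $K_j$ so it lies in a single gap of $K_j$, which, containing $a,b\in K_i$ in its interior, is bad for $K_j$; hence $(a,b)\cap L_j=\emptyset$.

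The main obstacle is $\mathcal{F}$-invariance. For $f\in\mathcal{F}$ and a good gap $(a,b)$ of $K_i$, the image $f((a,b))$ is an arc with endpoints in $K_i$, but could in principle cross some $K_j$ ($j\neq i$) if the containment $f(K_j)\subset K_j$ is strict. The crucial auxiliary statement I would prove is $f(K_j)=K_j$ for every $f\in\mathcal{F}$ and every $j$; granted this, $f^{-1}\bigl(\bigcup_jK_j\bigr)=\bigcup_jK_j$, so $f((a,b))$ avoids $\bigcup_jK_j$ and, being an arc with endpoints in $K_i$, must be a single good gap of $K_i$. Proving surjectivity of $f$ on each $K_j$ is the technical heart of the argument; the strategy would be to exhibit $\bigcap_n f^n(K_j)$ (or an analogous tail compact) as an $\mathcal{F}$-invariant closed subset of $K_j$---using local contraction together with the no-finite-orbit hypothesis to transfer $f$-invariance to full $\mathcal{F}$-invariance---after which the minimality of $K_j$ forces equality.

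Proximality on each connected component $C$ of $L_i$ then follows from local contraction: two points $x,y\in C$ are joined by an arc in $C$, which almost surely shrinks under random composition, so some $g\in G_\mathcal{F}$ brings $g(x)$ and $g(y)$ arbitrarily close. Finally, in the orientation-preserving case, each $f$ preserves cyclic order and permutes the components of $\bigcup_iL_i$ consistently; reading off the cyclic sequence of component labels on $\mathbb{S}^1$ and using its invariance under $\mathcal{F}$ together with the no-finite-orbit hypothesis, a combinatorial analysis shows that the sequence consists of a cyclic repetition of a permutation of $(1,\ldots,d)$, so each label occurs equally often and the $L_i$ have the same number of connected components.
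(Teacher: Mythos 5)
Your overall architecture --- take $L_i$ to be the complement of the ``bad'' gaps of $K_i$, prove there are only finitely many bad gaps by the accumulation argument, and get pairwise disjointness by nesting of gaps --- coincides with the paper's construction of the families $\cE_{K_i}$ in Section \ref{sec:topprop} (Lemma \ref{lemma:0001} is exactly your finiteness observation). However, the two steps you yourself identify as the technical heart both break down. The auxiliary claim $f(K_j)=K_j$ for every $f\in\cF$ is false for semigroup actions: minimality only yields $f(K_j)\subset K_j$, and the inclusion can be strict. For instance, take two circle homeomorphisms restricting on an arc identified with $[0,1]$ to $x\mapsto x/3$ and $x\mapsto (x+2)/3$; the middle-thirds Cantor set $K$ of that arc is a minimal set of the semigroup, there is no finite orbit, yet $f_1(K)=K\cap[0,1/3]\subsetneq K$. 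No tail intersection $\bigcap_n f^n(K_j)$ rescues this: in the example it is a single point that is not $\cF$-invariant. The paper obtains the invariance of $L_i$ without any such claim: Lemma \ref{lemma:00002} shows, by a counting argument that uses only the forward invariance $f(K')\subset K'$ of the \emph{other} minimal sets, that $f$ permutes the maximal intervals of $\cE_{K_j}$ in the sense that each one contains the image of exactly one; Lemma \ref{lemma:00003} then pulls back: if the image of a good gap met some $K_j$ it would contain a whole interval of $\cE_{K_j}$, hence the $f$-image of one of them, hence the original gap would already meet $K_j$. Some argument of this kind is unavoidable in your scheme.

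The proximality step is also not salvageable as written. Local contraction gives almost-sure contraction of sufficiently small neighborhoods, not of macroscopic arcs, so ``an arc in $C$ almost surely shrinks'' is not a valid deduction (an arc meeting two minimal sets never shrinks, and nothing you have said prevents a similar obstruction inside one component). In fact the conclusion is false for your $L_i$: when $\cF$ contains orientation-reversing maps, a maximal interval $[a,b]$ of $\cE_{K_i}$ can split as $[a,\hat a]\cup[\hat b,b]$ with every element of the semigroup preserving $[a,b]$ either fixing or swapping the two halves (case (ii) of Lemma \ref{lemma:forTeo2}), so two points in different halves are never brought close together. This is precisely why the paper's $L_i$ is \emph{not} the union of $K_i$ with all good gaps: it discards the middle gaps $(\hat a,\hat b)$, proximality on the retained pieces is proved by the $\sup/\inf$ fixed-point argument over the orientation-preserving sub-semigroup $S_I^+$, and Theorem \ref{teo3} shows that either all or none of the intervals split, which is what gives the count of connected components. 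Your closing combinatorial argument for the orientation-preserving case is plausible but remains a sketch; the substantive gaps are the two above.
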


Now to state the rest of our results, let us introduce some probabilistic concepts involved.
 For $\cF\subset\homS$, a \emph{Random Dynamical System (RDS)}  associated to $\cF$ is given by a probability measure $\nu$ on $\homS$ such that the support of $\nu$ is exactly $\cF$. To be more specific, we say that $(\cF,\nu)$ is an RDS.
The corresponding product probability space $(\cF^\bN, \nu^\bN)$ is ergodic invariant under the shift map $\sigma$. Define the \emph{random walk} $\omega \mapsto (f^n_\omega)$ on the semigroup $G_\cF$ generated by $\cF$ as follows:
\[
f^n_\omega = f_{n} \circ \cdots \circ f_1,
\]
with the implicit notation  $\omega = (f_n)_{n \in \mathbb{N}}\in \cF^\bN$. We use the notation $f^0_{\omega}=\textit{id}_{\bS^1}$. 
Consider the \emph{skew-shift map} 
\[
T: (\omega, x) \mapsto (\sigma\omega, f_1(x))
\]
on $\cF^\bN \times \bS^1$. A probability measure $\mu$ on $\bS^1$ is called \emph{stationary} if the product measure $\nu^\bN\otimes\mu$ is $T$-invariant. When $\nu^\bN\otimes\mu$ is $T$-ergodic we say that $\mu$ is an \emph{ergodic stationary} measure for $(\cF, \nu)$.

As we discuss in Section \ref{sec:w-maps}, by \cite{Fur:63} and \cite{Mal:17}, for an RDS $(\cF,\nu)$ of circle homeomorphisms without finite orbits, we have that for every $x$, the sequence 
\begin{equation}\label{eq:seqaverage}
    \left(\frac{1}{N} \sum_{n=0}^{N-1}\int_{\cF^\bN}\delta_{f_\omega^n(x)}\,d\nu^\bN(\omega)\right)_{n\in\bN}
\end{equation}
converges in the weak-$*$ topology to a probability measure $m_x$ on $\bS^1$, which is stationary for $(\cF,\nu)$. Moreover, the map $x\mapsto m_x$ is continuous on $\bS^1$. 

\begin{theorem}\label{teo:01}
    Let $(\cF,\nu)$ be an RDS of circle homeomorphisms without finite orbits. Let $d$ denote the number of ergodic stationary measures $\mu_1,\ldots, \mu_{ d }$. For $x\in\bS^1$, consider the limit measure $m_x$ on $\bS^1$ in the weak-$*$ topology of the sequence in \eqref{eq:seqaverage}. Also, consider the continuous maps $u_1,\ldots,u_d\colon \bS^1\to [0,1]$ such that 
    \[
    m_x=\sum_{i=1}^d u_i(x)\mu_i.
    \]
    Then, for every $x\in\bS^1$ 
    \[
    \#\left\{j\in\{1,\ldots,d\}:u_j(x)>0\right\}\leq 2,
    \]
    and for every $i\in\{1,\ldots,d\}$
    \[
    \#\left\{j\in\{1,\ldots,d\}\backslash\{i\}:\{u_i>0\}\cap \{u_j>0\}\neq\emptyset\right\}\leq 2.
    \]
    Moreover, for every $i\in\{1,\ldots,d\}$
    \begin{enumerate}[label= (\roman*)]
        \item both sets $\{u_i=0\}$ and $\{u_i=1\}$ are invariant by all $f\in\cF$ and can be written as a finite union of closed intervals,
        \item $u_i$ is monotone on any interval $I\subset \{0<u_i<1\}$, and
        \item if $u_i|_I\equiv C$ for some interval $I\subset\bS^1$ and $C\in[0,1]$, then for every $f\in\cF$ there exists $c_f\in[0,1]$ such that $u_i|_{f(I)}\equiv c_f$.
    \end{enumerate}
\end{theorem}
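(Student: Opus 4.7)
The plan is to anchor everything on the identification $u_i(x) = m_x(L_i)$, where $L_1,\ldots,L_d$ are the finite unions of closed intervals from Theorem \ref{newteo:1}. Since the supports $K_j\subset L_j$ of the $\mu_j$ are pairwise disjoint, $\mu_j(L_i)=\delta_{ij}$, and the decomposition $m_x=\sum_j u_j(x)\mu_j$ immediately yields $u_i(x)=m_x(L_i)$. Continuity of $u_i$ (already recorded in the setup) can be re-derived from weak-$*$ continuity of $x\mapsto m_x$ together with the fact that each $\mu_j$, and hence each $m_x$, has no atoms under the no-finite-orbit hypothesis, so the finite boundary $\partial L_i$ carries zero mass.

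Next I would derive from the stationarity of the family $\{m_x\}$ the pointwise ``harmonic'' identity
\[
u_i(x)=\int_{\cF} u_i(f(x))\,d\nu(f),\qquad x\in\bS^1,
\]
by checking that $\int m_{f(x)}\,d\nu(f)=m_x$ (a reindexing of the Cesàro sum in \eqref{eq:seqaverage}). Since $u_i\geq0$ is continuous and $\operatorname{supp}(\nu)=\cF$, evaluating at $x\in\{u_i=0\}$ forces $u_i(f(x))=0$ for every $f\in\cF$, and similarly at $x\in\{u_i=1\}$. This gives the $\cF$-invariance in (i) and also settles (iii) in the boundary cases $C\in\{0,1\}$.

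For the structural part of (i), observe that $L_i\subset\{u_i=1\}$ and $\{u_i=1\}\cap K_j=\emptyset$ for $j\neq i$ (since $x\in K_j$ implies $m_x=\mu_j$), while dually $\bigcup_{j\neq i}L_j\subset\{u_i=0\}$ and $K_i\cap\{u_i=0\}=\emptyset$. Combining this with the monotonicity statement (ii) and the finite gap structure of $\bigcup_j L_j$ should force $\{u_i=0\}$ and $\{u_i=1\}$ to be finite unions of closed intervals with extremities in $\bigcup_j K_j$. The two cardinality bounds then come from the cyclic order on $\bS^1$: the complement $\bS^1\setminus\bigcup_j L_j$ is a finite union of open arcs, each bounded by components of at most two distinct $L_j$'s, and monotonicity on such an arc, once we are inside $\{0<u_i<1\}$, limits to at most two the number of $u_j$'s simultaneously positive there; the neighbour bound on $L_i$ follows from the same picture.

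The main obstacle is (ii), the monotonicity on intervals inside $\{0<u_i<1\}$. My plan is to assume for contradiction that $u_i$ has a strict interior local extremum at some $x_0$, with value $M\in(0,1)$, and to exploit the harmonic identity $M=\int u_i(f(x_0))\,d\nu(f)$. Iterating along a $\nu^{\bN}$-random orbit and using proximality of the $G_\cF$-action on the components of the $L_j$'s (Theorem \ref{newteo:1}) together with local contraction in case (i) of the introduction, one argues that the random orbit accumulates in some $K_j$, where $u_i$ takes the constant value $\delta_{ij}$; averaging the harmonic identity along such accumulation then forces $M=\delta_{ij}\in\{0,1\}$, a contradiction. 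Finally, (iii) with $C\in(0,1)$ follows from (ii): the plateau $\{u_i=C\}$ lies in $\{0<u_i<1\}$, on which $u_i$ is both monotone and constant, and applying the harmonic identity along $I$ together with the fact that $f(I)$ is again an interval propagates the constancy to $f(I)$.
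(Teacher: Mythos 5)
Your overall architecture (the identity $u_i(x)=m_x(L_i)$, the harmonic identity $u_i=Pu_i$, invariance of the level sets $\{u_i=0\}$ and $\{u_i=1\}$ from $P$-harmonicity plus $\operatorname{supp}\nu=\cF$) matches the paper's, and those pieces are sound. But there is a genuine gap at the heart of the argument: your proof of the monotonicity statement (ii). You propose to rule out a strict interior local extremum of value $M\in(0,1)$ by iterating the harmonic identity, letting the random orbit accumulate on some minimal set $K_j$, and ``averaging'' to conclude $M=\delta_{ij}$. This is vacuous: the minimal set on which the orbit of $x_0$ accumulates depends on $\omega$, and by the very definition of the weight maps the orbit converges to $K_i$ on an event of probability $u_i(x_0)=M$ and to other minimal sets with total probability $1-M$. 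Averaging $\delta_{ij}$ over this decomposition returns exactly $M$, so no contradiction is obtained; indeed your argument never uses the local-extremum hypothesis. Harmonicity alone cannot give monotonicity. The mechanism the paper uses is an order/coupling argument: on a gap arc $[y_i,x_j]$ between a component of $\cA_i$ and a component of $\cA_j$, every $f\in\cF$ maps the arc into another such gap arc with endpoints in $\cA_i$ and $\cA_j$ in a consistent relative order (up to a global orientation flip), so the \emph{event} $\{\omega:\operatorname{cl}((f_\omega^n(x)))=K_i\}$ is monotone in $x$ along the arc (it only grows as $x$ moves toward the $\cA_i$ end). Monotonicity of $u_i$ is then immediate from monotonicity of these events under inclusion. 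Without this pointwise coupling, your derivation of (iii) for $C\in(0,1)$ also fails: $Pu_i(x)=Pu_i(y)$ gives equality of integrals of $u_i(f(x))$ and $u_i(f(y))$ over $f$, which yields $u_i(f(x))=u_i(f(y))$ for ($\nu$-a.e., hence all) $f$ only because the integrands are pointwise ordered by the same event inclusion.

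A secondary issue: you route the bound $\#\{j:u_j(x)>0\}\le 2$ through monotonicity on gap arcs, which both inherits the gap above and is logically heavier than needed. In the paper this bound is purely topological and comes first: Lemma \ref{lemma:00003} shows that the image under any $f\in\cF$ of a gap interval $[y_1,x_2]$ between two minimal sets $K_1,K_2$ is disjoint from every other minimal set, so the entire orbit $\mathcal{O}_\cF(x)$ of a point in the gap can meet at most $K_1$ and $K_2$ (Proposition \ref{Prop:000001}), whence $u_j(x)=0$ for all other $j$ directly from the definition of $u_j$. You should establish this orbit-confinement statement independently (it is also exactly what powers the coupling argument for (ii)), rather than deducing positivity constraints from monotonicity of the $u_j$'s.
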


For $\cF\subset\mbox{Hom}(\bS^1 )$, consider $\cF^{-}=  \{f^{-1}\colon f\in\cF\}$. Also, consider $\nu^{-}$, the probability measure supported on $\cF^{-}$ naturally induced by $\nu$. We refer to $(\cF^{-},\nu^{-})$ as the \emph{inverse RDS} of $(\cF,\nu)$. In Example \ref{ex1}, we show an RDS where the number of ergodic measures differs from that of the inverse RDS. The following theorem is proved in Section \ref{sec:d+ and d-}.

\begin{theorem}\label{teo2}
    Let $(\cF,\nu)$ be an RDS of circle homeomorphisms without finite orbits and consider its respective inverse RDS $(\cF^{-},\nu^{-})$. Let $d_+$ and $d_-$ be the numbers of ergodic stationary measures associated to $(\cF,\nu)$ and $(\cF^{-},\nu^{-})$ respectively. Then,
    \[
    |d_+-d_-|\leq 1.
    \]
    Moreover, if $\cF\subset\mbox{Hom}^+(\bS^1 )$, then $d_+=d_-$.
\end{theorem}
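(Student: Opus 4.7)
The plan is to combine an invariant-measure duality with the interval structure from Theorem~\ref{newteo:1}.

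I would first reduce to the local-contraction case. Since every element of $\cF$ is a homeomorphism of $\bS^1$, a Borel probability measure $\mu$ is $G_\cF$-invariant if and only if $f_*\mu=\mu$ for every $f\in\cF$, equivalently $(f^{-1})_*\mu=\mu$ for every $f\in\cF$, i.e.\ $\mu$ is $G_{\cF^-}$-invariant. Hence $(\cF,\nu)$ lies in Case (ii) of the trichotomy exactly when $(\cF^-,\nu^-)$ does, and in that event $d_+=d_-=1$. From here on assume both RDSs are in Case (i).

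Next, if $d_+\geq 2$, I apply Theorem~\ref{newteo:1} to $\cF$ to obtain disjoint sets $L_1,\ldots,L_{d_+}$. Let $L=\bigcup_i L_i$. The key structural observation is the duality that $L^-:=\overline{\bS^1\setminus L}$ is $\cF^-$-invariant: $f(L)\subset L$ gives $L\subset f^{-1}(L)$, whence $f^{-1}(\bS^1\setminus L)\subset\bS^1\setminus L$, which passes to closures. Moreover $L^-$ is a finite union of closed intervals whose components are the closures of the components of $\bS^1\setminus L$, interlacing cyclically with those of $L$; in particular $L$ and $L^-$ have the same number of components. When also $d_-\geq 2$, applying Theorem~\ref{newteo:1} to $\cF^-$ yields an analogous family $L_1^-,\ldots,L_{d_-}^-$ and an analogous $\cF$-invariant complement.

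The combinatorial core of the argument is to compare these two decompositions. Every $\cF^-$-minimal set $K_j'$ is contained either in $L^-$ or in $\mbox{int}(L)$. In the first case, because $\cF^-$ maps components of $L^-$ into components of $L^-$ and the proximality of $G_{\cF^-}$ on each component of any $L_j^-$ (from Theorem~\ref{newteo:1}) forces at most one $K_j'$ per $\cF^-$-orbit of components, the number of such $K_j'$ is controlled by the $L_i$-coloring of $L$. In the second case, one must show that at most one $K_j'$ fits inside $\mbox{int}(L)$: Case (i) local contraction pushes generic $\cF$-orbits onto $\bigcup_i K_i\subset\mbox{int}(L)$, so generic backward $\cF^-$-orbits escape $\mbox{int}(L)$, leaving room for only one exceptional $\cF^-$-minimal set inside. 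Combined with the symmetric argument swapping $\cF$ and $\cF^-$ (together with a direct analysis of the edge cases $d_+=1$ or $d_-=1$, where Theorem~\ref{newteo:1} does not apply directly), this yields $|d_+-d_-|\leq 1$. When $\cF\subset\mbox{Hom}^+(\bS^1)$, the ``same number of connected components'' clause of Theorem~\ref{newteo:1} together with orientation preservation forces an exact bijection between the $L_i$- and $L_j^-$-colorings of the interlaced components, ruling out the exceptional case and giving $d_+=d_-$.

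The main obstacle is the exceptional-component analysis above: showing that at most one $\cF^-$-minimal set can lie in $\mbox{int}(L)$ in general, and that this exception vanishes when orientation is preserved. This will require a careful interplay between the $\cF$- and $\cF^-$-semigroup actions on the two families of components, using both the proximality and the monotonicity statements of Theorem~\ref{newteo:1}.
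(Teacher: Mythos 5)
Your opening moves are sound: the reduction via the common invariant measure, and the observation that the closure of the complement of the $\cF$-invariant interval family is $\cF^-$-invariant, are correct and are exactly the structural facts the paper exploits (its ``gap classes'' $C_{i,j}$; see Remark \ref{remark:Cij(n+2)}). The problem is that you then point the inequality in the hard direction. By the symmetry $(\cF^-)^-=\cF$ it suffices to prove the \emph{lower} bound $d_-\geq d_+-1$, and that only requires exhibiting $d_+-1$ pairwise disjoint nonempty closed $\cF^-$-invariant sets, each of which automatically contains an $\cF^-$-minimal set. The paper (Lemma \ref{lemma:Cij(n+2)}) gets these by grouping the gap components according to which pair $\{i,j\}$ of families $\cA_i,\cA_j$ from Theorem \ref{teo:000000004} they separate: each group is $\cF^-$-invariant, distinct groups are disjoint, and the neighbor structure of Lemma \ref{lemma:cV} (each index has at most two neighbors, at most two indices have exactly one) yields at least $d_+-1$ nonempty groups. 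You instead aim at the \emph{upper} bound $d_-\leq d_++1$, and the two claims this rests on are not established. First, ``at most one $K_j'$ per $\cF^-$-orbit of components of $L^-$'' would need proximality of $G_{\cF^-}$ on the gap components of $L$; Theorem \ref{newteo:1} applied to $\cF^-$ gives proximality only on the components of the $L_j^-$, which are neighborhoods of the $\cF^-$-minimal sets and in general a different family from the gaps of $L$, so nothing prevents a single invariant union of gaps from carrying two minimal sets (say, one accumulating on the $\cA_i$-side endpoints and one on the $\cA_j$-side). Second, ``at most one exceptional $\cF^-$-minimal set inside $\mbox{int}(L)$'' is supported only by the heuristic that generic backward orbits escape; local contraction of the forward system does not by itself bound the number of minimal components of the backward-invariant exceptional set, so this step is a genuine gap, not a routine verification.

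For the orientation-preserving refinement the paper again argues by a lower bound (Lemma \ref{lemma:d+=d-}): when $\cF\subset\mbox{Hom}^+(\bS^1)$ the \emph{ordered} gap classes $C_{[i,j]}$ are individually $\cF^-$-invariant, and choosing for each $i$ a neighbor $j_i$ produces $d_+$ pairwise disjoint such classes, whence $d_-\geq d_+$ and, by symmetry, equality; your ``exact bijection between colorings'' is the right picture but would again require the one-minimal-set-per-class upper bound. I would reorganize the argument around the disjoint invariant gap classes and count ``at least one minimal set per class'' rather than ``at most one''.
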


Since, $\cF$ has a finite invariant set if and only if $\cF^-$ has a finite invariant set, we have the following topological consequence of the number of minimal sets.

\begin{corollary}
    Let $\cF\subset\mbox{Hom}(\bS^1 )$ and $\cF^{-}= \{f^{-1}\colon f\in\cF\}$. There is no finite invariant set under each map in $\cF$ simultaneously. Let $d_+$ and $d_-$ be the numbers of $\cF$-minimal sets and $\cF^-$-minimal sets respectively. Then,
    \[
    |d_+-d_-|\leq 1.
    \]
    Moreover, if $\cF\subset\mbox{Hom}^+(\bS^1 )$, then $d_+=d_-$.
\end{corollary}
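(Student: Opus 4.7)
The plan is to deduce this directly from Theorem~\ref{teo2} via the correspondence, recalled from \cite{Mal:17} in the introduction, between the number of closed minimal sets of $G_\cF$ and the number of ergodic stationary measures of any RDS supported on $\cF$. The only technical wrinkle is that the definition of an RDS in the paper requires a probability measure whose support equals $\cF$, which forces $\cF$ to be closed; a short preliminary reduction handles this.

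First I would set $\tilde\cF := \overline{G_\cF}\subset\homS$ and observe that, by continuity of the action on $\bS^1$, a closed set $K\subset\bS^1$ is $G_\cF$-invariant if and only if it is $\tilde\cF$-invariant; consequently $G_\cF$ and $\tilde\cF$ have exactly the same closed minimal sets. Since inversion is a homeomorphism of $\homS$, the identity $\overline{G_{\cF^-}}=\tilde\cF^{-1}$ holds, and the same reasoning gives the analogous statement for the inverses. So $d_+$ and $d_-$ are unchanged if one replaces $\cF$ by $\tilde\cF$, and if $\cF\subset\mbox{Hom}^+(\bS^1)$ then $\tilde\cF\subset\mbox{Hom}^+(\bS^1)$ as well, since the orientation-preserving homeomorphisms form a closed subgroup.

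Next, since $\homS$ is Polish, the closed subset $\tilde\cF$ is separable; picking a dense sequence $(g_n)_{n\geq 1}\subset\tilde\cF$ and setting $\nu := \sum_{n\geq 1} 2^{-n}\delta_{g_n}$ produces a Borel probability measure on $\homS$ with support exactly $\tilde\cF$, so $(\tilde\cF,\nu)$ is an RDS in the sense of the paper, whose inverse is $(\tilde\cF^{-1},\nu^-)$. By the trichotomy of \cite{Mal:17}, the number of ergodic stationary measures of $(\tilde\cF,\nu)$ equals the number of $\tilde\cF$-minimal sets (and likewise on the inverse side), so these numbers are precisely $d_+$ and $d_-$. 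Applying Theorem~\ref{teo2} yields $|d_+-d_-|\leq 1$, with equality in the orientation-preserving case. The only substantive point is the first step, ensuring that both counts are preserved simultaneously under passage to the closure of the semigroup; after that, the corollary is an immediate application of Theorem~\ref{teo2}, with no further circle-dynamics content.
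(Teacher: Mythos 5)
Your proposal is correct and follows essentially the same route as the paper, which leaves this corollary as an immediate consequence of Theorem~\ref{teo2} combined with the correspondence from \cite{Mal:17} between closed minimal sets of $G_\cF$ and ergodic stationary measures of any RDS supported on $\cF$. Your preliminary reduction to $\overline{G_\cF}$ (so that a probability measure with the required support actually exists, and so that the minimal-set counts are unaffected) is a technical point the paper glosses over, and you handle it correctly.
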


This paper is organized as follows: We begin by discussing the topological properties of the minimal sets of the semigroup $G_\cF$ in Section \ref{sec:topprop}.
 In Section \ref{sec:w-maps}, we demonstrate part of Theorem \ref{teo:01}, which can be compared with Theorem \ref{teo:000000004}. In Section \ref{sec:neighbors}, we present results used to conclude the proof of Theorem  \ref{teo:01}. We also introduce the concepts of neighbor and path in Section \ref{sec:neighbors}. In Section \ref{sec:prosimal}, we discuss the concept of proximity and construct nontrivial intervals over which the action is proximal. A proof of Theorem \ref{newteo:1} is shown in Section \ref{sec:prosimal}. Finally, in Section \ref{sec:d+ and d-}, we study the inverse RDS and prove Theorem \ref{teo2}.

\section{Basic notations}
Let us introduce some preliminary notations, which we use throughout the text.

A \textit{circular order} of a finite enumerated set of distinct points \(\{x_1, x_2, \dots, x_n\} \subset \mathbb{S}^1\), $n>2$, is a permutation $\phi:\{1,2,\ldots,n\}\to\{1,2,\ldots,n\}$ such that: For any \(i,j,k\in\{1,2,\ldots,n\}\) with $\phi(i)<\phi(j)<\phi(k)$, the triplet $\langle x_{\phi(i)} ,x_{\phi(j)}, x_{\phi(k)}\rangle$ appears in the order \(x_{\phi(i)} \to x_{\phi(j)} \to x_{\phi(k)}\) when traversing the circle \(\mathbb{S}^1\) counterclockwise.
If $\phi$ and $\phi'$ are two circular orders of the same finite enumerated set \(\{x_1, x_2, \dots, x_n\}\), then $\phi$ is a translation of $\phi'$, that is, for some $i\in\{1,\ldots,n\}$, $\phi=\phi'+i\, (\text{mod } n)$. 

We say that a $n$-tuple \(\langle x_1, x_2, \dots, x_n\rangle \), \(n>2\), is \emph{circularly ordered} when the identity map over $\{1,2,\ldots,n\}$ is a circular order of $\{x_1, x_2, \dots, x_n\}$. Let $\CO$ denote the set of all $n$-tuples that are circularly ordered for all $n\geq 3$. In particular, a triplet  $\langle a,b,c\rangle$ is in $\CO$ if and only if it appears in the order \(a \to b \to c\) when traversing the circle \(\mathbb{S}^1\) counterclockwise.
Let us extend the concept of order to finite collections of sets. For $n\geq 3$ A $n$-tuple $\langle A_1,\ldots,A_n\rangle$ of pairwise disjoint subsets of $\bS^1$ is said \emph{circularly ordered} if, any set $\langle x_1,\ldots,x_n\rangle\in \CO$ whenever $x_i\in A_i$.

Given $a,b\in\bS^1$, $a\neq b$, the \emph{open interval}  $(a,b) $ is defined by
\[
(a,b)=\{x\in\bS^1\colon \langle a,x,b\rangle\in\CO\}.
\]
 The \emph{closed interval} $[a,b]$ is the union of the open interval $(a,b)$ with the pair \(\{a,b\}\). Here, we write \([a,a]=\{a\}\). These definitions follow the general notion of an interval (or, an arc) in the circle, identifying the circle as a counterclockwise real interval. For $a\neq b$ and $I\in\{[a,b],(a,b)\}$, we write $x<y$ for $x,y\in I$, $x\neq y$, if $\langle a,x,y,b\rangle\in\CO$. And, we write $x\leq y$ when $x=y$ or $x<y$.

Now, we introduce the concept of monotonicity for functions defined on intervals of the circle. Note that the image of an interval under a homeomorphism is another interval.
 Consider $a,b\in\bS^1$, $a\neq b$, and $I\in\{[a,b],(a,b)\}$. For $f\in\homS$,  we say that $f$ is \emph{monotone increasing} on $I$ if for any $x,y\in I$, $x\neq y$, such that $x<y$ then $f(x)\leq f(y)$. We call $f$ \emph{monotone decreasing} on an interval $I\in\{[a,b],(a,b)\}$ if for any $x,y\in I$, $x\neq y$, such that $x<y$ then $f(y)\leq f(x)$. And, we say that $f$ is \emph{monotone} if it is increasing or decreasing.

\section{Invariant minimal sets}\label{sec:topprop}

The results in this section are strictly topological and are established using the structure of the circle. Let $\cF\subset\homS$. Let $\cK$ be the collection of the $\cF$- invariant minimal sets. We assume $d=\#\cK<\infty$.

 If $d>1$. For $K\in\cK$, define $\cE_K$ as the family of the closed intervals such that
\begin{enumerate}
    \item[(a)] if $I=[x,y]\in\cE_K$ then $x,y\in K$,
    \item[(b)] $I\cap K'=\emptyset$ for all $K'\in\cK\backslash\{K\}$ and $I\in\cE_K$; and
    \item[(c)] for $I=[x,y]$ and $\hat I=[\hat x,\hat y]$ in $\cE_K$, we have$(\hat y,x)\cap K=(y,\hat x)\cap K =\emptyset$, and $(\hat y,x)\cap K'\neq \emptyset$, $(y,\hat x)\cap K'' \neq\emptyset$  for some $K',K''\in\cK$.
\end{enumerate}
When $\#\cK=1$, consider $\cE_{K}=\{\bS^1\}$.
We have
\begin{equation}\label{eq:kiinei}
    K\subset \bigcup_{I\in\mathcal{E}_K} I.
\end{equation}
In particular, note that $\cE_{K}$ is the \emph{maximal} collection of closed intervals with the following two properties:
\begin{enumerate}
    \item[(a')] If $I=[x,y]\in\cE_K$ then $x,y\in K$, and
    \item[(b')] $\left(\cup_{I\in\mathcal{E}_K} I\right)\cap K'=\emptyset$ for all $K'\in\cK\backslash\{K\}$.
\end{enumerate}
That is, if $\cE$ is a collection of closed intervals satisfying (a') and (b') (replacing $\cE_K$ by $\cE$) then 
\[
\bigcup_{I\in\cE} I\subset\bigcup_{I\in\mathcal{E}_K} I
\]

Let us establish the main result of this section which will be key to proving Theorem \ref{newteo:1} and Theorem \ref{teo:01}.

\begin{proposition}\label{Prop:000001}
    Assume that $\cF\subset\homS$ and $\cK$ is a finite set. Then, for every $x\in\bS^1$ there exist $K_1$ and $K_2$ in $\cK$ such that the orbit $\mathcal{O}_\cF(x)$ does not intercect any element of $\cK\backslash\{K_1,K_2\}$.
\end{proposition}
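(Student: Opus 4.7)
The plan is to argue by contradiction: suppose there is a point $x \in \bS^1$ whose orbit $\mathcal{O}_\cF(x)$ meets three pairwise distinct minimal sets $K_1, K_2, K_3 \in \cK$. Choose $g_i \in G_\cF$ with $g_i(x) = y_i \in K_i$ for $i=1,2,3$ and, using the cyclic symmetry, assume $\langle y_1, y_2, y_3\rangle \in \CO$. The idea is to translate this into a statement about the three preimages $g_1^{-1}(K_1)$, $g_2^{-1}(K_2)$, $g_3^{-1}(K_3)$, all of which contain $x$, and to extract a topological obstruction from the way $K_1, K_2, K_3$ are arranged in $\bS^1$.

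The main tool I would use is the decomposition introduced in this section. For each $K\in\cK$, set $\mathcal{U}_K:=\bigcup_{I\in\cE_K} I$; by the definition of $\cE_K$ together with \eqref{eq:kiinei}, this is a closed set containing $K$, disjoint from every $K'\in\cK\setminus\{K\}$, and whose complement is the union of the essential gaps of $K$, namely the open arcs in $\bS^1\setminus K$ with both endpoints in $K$ that meet at least one other minimal set. These $\mathcal{U}_K$ encode how the circle is carved into $K$-intervals separated by arcs connecting $K$ to its ``neighbors'' among the $K_i$. Because $g_i(K_j)\subset K_j$ for all $i,j$, each preimage $g_i^{-1}(K_i)$ contains $K_i$ but is disjoint from every $K_j$ with $j\neq i$; so it lies in $\bS^1\setminus\bigcup_{j\neq i}K_j$. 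I would combine this with the homeomorphism property of the $g_i$ and the cyclic order $\langle y_1,y_2,y_3\rangle\in\CO$ to show that each of the three arcs of $\bS^1\setminus\{y_1,y_2,y_3\}$ is forced, under the $G_\cF$-action, to map to arcs whose endpoints lie in the same pair of minimal sets, and then deduce that no configuration of preimages at $x$ can simultaneously realize visits to all three $K_i$.

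The main obstacle is making this last geometric step precise. The difficulty is that $\cF$ is only a semigroup, so preimages $g_i^{-1}(K_i)$ may be strictly larger than $K_i$ and the $\mathcal{U}_K$ are generally not $\cF$-invariant; in particular one cannot simply transport the $\cE_{K_i}$-structure back and forth along the $g_i$. Overcoming this will likely require a case analysis on the orientations of the $g_i$ (preserving versus reversing) and on how the sets $K_1,\ldots,K_d$ interleave on $\bS^1$; the expected conclusion is that the cyclic-order constraint $\langle y_1,y_2,y_3\rangle\in\CO$ together with the invariance of each $K_i$ and the non-crossing nature of the $\cE_{K_i}$-intervals leaves no room for a third arc in the cyclic decomposition, which yields the desired contradiction.
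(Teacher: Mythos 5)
Your proposal has a genuine gap: the entire argument rests on the final ``geometric step'' that you yourself flag as the main obstacle and never carry out --- you only state an ``expected conclusion.'' The setup is a correct reformulation (an orbit meeting three distinct minimal sets must be ruled out), and you identify the right combinatorial object ($\cE_K$ and the union $\bigcup_{I\in\cE_K}I$), but nothing in the proposal actually produces the contradiction, and the route you choose makes it harder than necessary. Working with the preimages $g_i^{-1}(K_i)$ puts you exactly where the semigroup structure gives no control: the static facts you record ($g_i^{-1}(K_i)\supset K_i$ and $g_i^{-1}(K_i)\cap K_j=\emptyset$ for $j\neq i$) are perfectly consistent with the three sets sharing the common point $x$, so no contradiction follows from them, and the claim that each arc of $\bS^1\setminus\{y_1,y_2,y_3\}$ maps to an arc with endpoints in the corresponding pair of minimal sets is true but by itself obstructs nothing.

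The missing idea is that the $\cE_K$-structure is controlled in the \emph{forward} direction, not backward. The paper first proves (Lemma \ref{lemma:00002}) that every $f\in\cF$ (hence every $g\in G_\cF$, by composition) permutes the intervals of $\cE_K$: each $f(I)$ with $I\in\cE_K$ lies in some $J\in\cE_K$, \emph{and} each $J\in\cE_K$ contains some $f(I)$. In particular $\bigcup_{I\in\cE_K}I$ \emph{is} forward $\cF$-invariant, contrary to your remark; only backward invariance fails. The second half of that permutation statement is the decisive input. With it the proof runs forward and needs no contradiction hypothesis about three visits: if $x\notin\bigcup_{K\in\cK}K$, place $x$ in a gap $[y_1,x_2]$ with $y_1\in K_1$, $x_2\in K_2$ and $(y_1,x_2)$ disjoint from every minimal set. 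If some $g([y_1,x_2])$ met a third minimal set $K$, it would contain an entire interval $I\in\cE_K$ (its endpoints $g(y_1)\in K_1$, $g(x_2)\in K_2$ lie outside $I$), hence would contain $g(J)$ for some $J\in\cE_K$, forcing $J\subset[y_1,x_2]$ by injectivity of $g$ and contradicting the choice of the gap. To repair your write-up you would either have to prove this permutation lemma and switch to pushing the gap forward, or supply a complete combinatorial argument for the preimage configuration; neither is present, and the case analysis you defer to is precisely the content of the theorem.
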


Note that Proposition \ref{Prop:000001} does not say that all the cluster points of the sequence $(f_\omega^n(x))_{n\in\bN}$ are contained in $K_1$ or $K_2$. Initially, this is not the case. The proposition only states that if a limit point is in one of the minimal sets, then that set must be $K_1$ or $K_2$.

Before Proving Proposition \ref{Prop:000001}, let us establish some topological results.

\begin{lemma}\label{lemma:0001}
    Assume that $\cF\subset\mbox{Hom}(\bS^1 )$ and $\cK$ is a finite set. Then, for all $K\in\cK$, the set $\cE_K$ is finite.
\end{lemma}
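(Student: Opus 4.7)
The plan is to argue by contradiction: suppose $\cE_K$ is infinite and derive that some other minimal set $K'\in\cK\setminus\{K\}$ must meet $K$, which is impossible. Two preliminary observations set this up. First, distinct minimal sets in $\cK$ are pairwise disjoint: for $K_1\neq K_2$ both in $\cK$, the intersection $K_1\cap K_2$ is a closed $\cF$-invariant subset of each, and minimality forces it to be empty. Second, the intervals of $\cE_K$ are pairwise disjoint: by condition (c), any two elements $I=[x,y]$ and $\hat I=[\hat x,\hat y]$ are separated by two arcs $(y,\hat x)$ and $(\hat y,x)$ that each must contain a point of some set in $\cK$ (hence are nonempty), so $I$ and $\hat I$ cannot overlap or share an endpoint.

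Now since the elements of $\cE_K$ are pairwise disjoint closed arcs of $\bS^1$, there are at most countably many of positive length (their total length is bounded by $1$), and I would extract a countably infinite cyclically ordered subfamily $\{I_n=[x_n,y_n]\}_{n\in\bN}\subset \cE_K$, together with the corresponding family of gaps $\{G_n\}_{n\in\bN}$ lying between consecutive $I_n$. By condition (c), each gap $G_n$ is disjoint from $K$ and contains a point of some element of $\cK\setminus\{K\}$. Since $\cK$ is finite, the pigeonhole principle supplies a single $K'\in\cK\setminus\{K\}$ such that $K'\cap G_n\neq\emptyset$ for infinitely many $n$; choose representatives $z_n\in K'\cap G_n$ along such an index set. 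The gaps $G_n$ are pairwise disjoint subarcs of $\bS^1$, so their total length is at most $1$, and hence $|G_n|\to 0$ along a subsequence. The endpoints of each $G_n$ lie in $K$ (they are endpoints of some $I_m\in\cE_K$), so by compactness of $\bS^1$, passing to a further subsequence these endpoints converge to a common point $p\in\bS^1$, which belongs to $K$ since $K$ is closed. The relation $|G_n|\to 0$ forces $z_n\to p$ as well, so $p\in K'$ since $K'$ is closed. Thus $p\in K\cap K'$, contradicting the first observation.

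The main obstacle I anticipate is the bookkeeping required to set up the cyclic ordering of the subfamily and the corresponding gaps so that condition (c) can be legitimately applied between ``consecutive'' intervals. This requires confirming that the notion of adjacency in $\cE_K$ is well-defined (which follows from the disjointness of the intervals and the compactness of $\bS^1$), and handling any degenerate singleton intervals that may appear as elements of $\cE_K$. Once this topological setup is in place, the pigeonhole together with the shrinking-gap compactness argument closes the proof cleanly.
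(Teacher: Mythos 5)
Your argument is correct and follows essentially the same route as the paper's: both derive a contradiction by combining compactness with the pigeonhole principle over the finitely many other minimal sets, producing a sequence in a single $K'\in\cK\setminus\{K\}$ that accumulates at a point of $K$, which is impossible since distinct minimal sets are closed and disjoint. The only cosmetic difference is that you shrink the gaps between intervals (using that their total length is at most $1$) whereas the paper shrinks the intervals themselves and then looks at the gaps between consecutive ones; your bookkeeping of the gap lengths is, if anything, slightly more careful.
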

\begin{proof}
Assume that $\cE_K$ is infinite for some $K\in\cK$. By the compactness of $\bS^1$, there exists a sequence of closed intervals $(I_n)_{n\in\bN}$ in $\cE_K$ such that 
$\text{diam}(I_n)\to 0$ as $n\to\infty$. We can assume (possibly passing to a subsequence) that if $I_n=[a_n,\hat a_n]$ then the sequences $(a_n)_{n\in\bN}$ and $(\hat a_n)_{n\in\bN}$ of $K\subset\bS^1$ are convergent to a point $a\in K$. By the definition of the collection $\cE_K$, there is $K_n\in\cK\backslash\{K\}$ and $b_n\in K_{n}$ with $\langle a_n,b_n,a_{n+1}\rangle\in\CO$. Since $\cK$ is a finite set, we can find a subsequence $(n_k)_{k\in\bN}$ and $K'\in\cK\backslash\{K\}$ such that $K_{n_k}=K'$ for all $k\in\bN$. Then, $(b_{n_k})_{k\in\bN}$ is a sequence in $K'$ convergent to a point $a$ in $K$. Which contradicts the compactness of $K'$. Therefore, $\mathcal{E}_K $ is a finite set. 
\end{proof}

For $K\in\cK$, let us write $\ell_K=\#\cE_K$. Set 
\[
\mathcal{E}_K=\{I_{K,1}=[x_{K,1},y_{K,1}],\ldots,I_{K,\ell_K}=[x_{K,\ell_K},y_{K,\ell_K}]\}
\]
such that for $j\in\{1,\ldots,\ell_K\}$, $\langle x_{K,j},y_{K,j},x_{K,j+1}\rangle\in\CO$, where we are considering $\ell_K+1=1$.

\begin{lemma}\label{lemma:00002}
      Assume that $\cF\subset\mbox{Hom}(\bS^1 )$ and $\cK$ is a finite set. Then, for all $K\in\cK$, $j\in\{1,\ldots,\ell_K\}$ and $f\in\cF$ there is $k_j,r_j\in\{1,\ldots,\ell_K\}$ such that $f(I_{K,j})\subset I_{K,k_j}$ and $f(I_{K,r_j})\subset I_{K,j}$.
\end{lemma}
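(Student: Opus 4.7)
The lemma has two assertions---existence of $k_j$ with $f(I_{K,j}) \subset I_{K,k_j}$, and existence of $r_j$ with $f(I_{K,r_j}) \subset I_{K,j}$. My plan is to prove the first directly and then deduce the second as a consequence.

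For the first assertion, I reduce to showing that $f(I_{K,j}) \cap K' = \emptyset$ for every $K' \in \cK \setminus \{K\}$. Granted this, the pairwise disjoint collection $\{f(I_{K,1}), \ldots, f(I_{K,\ell_K})\}$ consists of closed intervals with endpoints in $f(K) \subset K$ whose union is disjoint from every $K' \neq K$, so it satisfies properties (a') and (b') appearing just before the lemma. The maximality of $\cE_K$ then forces $\bigcup_j f(I_{K,j}) \subset \bigcup_k I_{K,k}$, and since each $f(I_{K,j})$ is connected while the $I_{K,k}$ are pairwise disjoint closed intervals, each $f(I_{K,j})$ lies in a unique $I_{K,k_j}$. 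To establish the required disjointness I argue by contradiction: if $f(y) \in K'$ for some $y \in I_{K,j}$, then $y \notin K$ (otherwise $f(y) \in f(K) \cap K' \subset K \cap K' = \emptyset$), so $y$ must lie in an open component $(c,d) \subset I_{K,j}$ of $\bS^1 \setminus K$ with $c, d \in K$; by property (b) this component is disjoint from $\bigcup_{K'' \neq K} K''$. I then exploit the forward invariance $f(K'') \subset K''$ for every minimal $K'' \in \cK$, together with the maximality characterization of $\cE_K$ applied to suitably enlarged collections, to derive the contradiction.

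For the second assertion, the first gives a well-defined map $\phi_f \colon \{1, \ldots, \ell_K\} \to \{1, \ldots, \ell_K\}$ sending $j \mapsto k_j$. Existence of the $r_j$ is equivalent to surjectivity of $\phi_f$, and hence (on a finite set) to its injectivity. Assume for contradiction that $\phi_f(j_1) = \phi_f(j_2) = k$ with $j_1 \neq j_2$. Since $f$ is a homeomorphism and $I_{K,k}$ is a proper closed interval (as $\ell_K \geq 2$), $f^{-1}(I_{K,k})$ is a single closed interval containing the disjoint union $I_{K,j_1} \cup I_{K,j_2}$, and by connectedness it must contain one of the two open arcs of $\bS^1 \setminus (I_{K,j_1} \cup I_{K,j_2})$ joining them. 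That arc contains a gap between two consecutive intervals of $\cE_K$ in cyclic order, and by property (c) this gap meets some $K' \in \cK \setminus \{K\}$. Applying $f$ therefore puts a point of $K'$ into $I_{K,k}$, contradicting property (b).

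The main obstacle will be the disjointness $f(I_{K,j}) \cap K' = \emptyset$. In the semigroup setting $f(K)$ need not equal $K$, only $f(K) \subset K$, so $f^{-1}(K')$ can strictly contain $K'$ and no direct preimage-of-$K'$ argument rules out points of $I_{K,j}$ mapping into $K'$; the proof requires combining the invariance $f(K'') \subset K''$ for every $K'' \in \cK$ with the maximality of $\cE_K$ through a careful ordering analysis on the circle.
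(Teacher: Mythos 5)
Your reduction of the second assertion to the first is correct and, on its own, clean: once every $f(I_{K,j})$ is known to land inside a single $I_{K,k_j}$, the injectivity-equals-surjectivity argument via $f^{-1}(I_{K,k})$ being a closed arc containing $I_{K,j_1}\cup I_{K,j_2}$ and hence a whole gap, which by property (c) meets some $K'\in\cK\setminus\{K\}$ and therefore (by $f(K')\subset K'$) forces $I_{K,k}\cap K'\neq\emptyset$, is valid and even slightly tidier than the paper's treatment of that half. The problem is the first assertion. Your whole plan hinges on the claim $f(I_{K,j})\cap K'=\emptyset$ for all $K'\in\cK\setminus\{K\}$, and you never actually prove it: the relevant paragraph ends with ``I then exploit the forward invariance \dots to derive the contradiction,'' and your closing paragraph concedes that this disjointness is ``the main obstacle.'' That obstacle is the entire content of the lemma. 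The natural attempt --- if $f(I_{K,j})$ meets $K'$ then, since its endpoints lie in $f(K)\subset K$ and hence outside every interval of $\cE_{K'}$, it must swallow a whole interval $I'\in\cE_{K'}$ --- leads to needing that $I'$ contains $f(J')$ for some $J'\in\cE_{K'}$, i.e.\ precisely the surjectivity statement being proved; this is exactly how the paper's Lemma \ref{lemma:00003} is deduced, \emph{after} and \emph{from} Lemma \ref{lemma:00002}. So the order of deduction in your outline runs backwards relative to the only available argument, and no independent proof of the disjointness is supplied.

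For comparison, the paper sidesteps the disjointness claim entirely at this stage. It tracks only the $2\ell_K$ endpoints: each of $f(x_{K,j})$, $f(y_{K,j})$ lies in $K\subset\bigcup_{I\in\cE_K}I$, hence in some $I_{K,k_j^-}$, $I_{K,k_j^+}$; every complementary gap meets some other minimal set, hence so does its image under $f$, so the image of any arc containing a gap cannot sit inside a single element of $\cE_K$; a pigeonhole count on the resulting pairwise-disjoint pairs $\{k_j^-,k_j^+\}$ inside $\{1,\ldots,\ell_K\}$ then forces $k_j^-=k_j^+$ and yields the containment and the surjectivity simultaneously. To repair your proposal you should replace the unproven disjointness step with this endpoint-and-gap bookkeeping (or an equivalent); as written, the proposal is a plan whose central step is missing.
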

\begin{proof}
Consider $f\in \cF$ and $K\in\cK$. By \eqref{eq:kiinei} and the $\cF$-invariance of $K$,
\[
f(x_{K,j}),f(y_{K,j})\in f(K)\subset K\subset \bigcup_{I\in\mathcal{E}_K} I,
\]
for all $j=1,\ldots,\ell_K$. Case $\ell_K=1$, it is easy to conclude the lemma for $k_1=r_1=1$.

Assume $\ell_K>1$.
For $j=1,\ldots,\ell_K$, there exist $k_j^-,k_j^+\in \{1,\ldots,\ell_K\}$ such that $f(x_{K,j})\in I_{K,k_j^-}$ and $f(y_{K,j})\in I_{K,k_j^+}$. 
Note that by definition, for all 
\[
a,b\in\{x_{K,1},y_{K,1}, x_{K,2},y_{K,2},\ldots, x_{K,\ell_K},y_{K,\ell_K}\},
\]
with $b\neq y_{K,j}$, $a\neq x_{K,j}$ we have that both intervals $[a,x_{K,j}]$ and $[y_{K,j},b]$ intersect $\cup_{K'\in\cK\backslash\{K\}}K$. Because, $[y_{K,i},x_{K,j}]\subset [a,x_{K,j}]$ and $[y_{K,j},x_{K,i}]\subset [y_{K,j},b]$ for some $i=1,\ldots,\ell_K$. By $\cF$-invariance, both intervals $f([a,x_{K,j}])$ and $f([y_{K,j},b])$ also intersect $\cup_{K'\in\cK\backslash\{K\}}K$. In particular, for all $I\in\cE_K$, $f([a,x_{K,j}])\not\subset I$ and $f([y_{K,j},b])\not\subset I$. Therefore, for all $i,j\in\{1,\ldots,\ell_K\}$, $i\neq j$, 
\[
k_i^+,k_i^-\notin\{k_j^+,k_j^-\},
\]
and so,
\[
\{k_j^-\colon j=1,\ldots,\ell_K\}=\{k_j^+\colon j=1,\ldots,\ell_K\}=\{1,\ldots,\ell_K\}.
\]
So we must necessarily have equality $k_j^-=k_j^+$ for all $j=1,\ldots,\ell_K$. Moreover, each interval $f(I)$ (with $I\in\cE_K$), is contained in some $J\in\cE_K$, and vice versa each $J\in\cE_K$ contains some $f(I)$ (with $I\in\cE_K$). Therefore we also conclude the existence of $r_j$.
The lemma follows.
\end{proof}

\begin{lemma}\label{lemma:00003}
    Assume that $\cF\subset\mbox{Hom}(\bS^1 )$ and $\cK$ is a finite set. Let $K_1,K_2\in \cK$. Let $y_1\in K_1$ and $x_2\in K_2$ such that $(y_1,x_2)\cap K=\emptyset$ for all $K\in \cK$. Then, for every $f$ and $K\in \cK\backslash\{K_1,K_2\}$ we have $f([y_1,x_2])\cap K=\emptyset$.
\end{lemma}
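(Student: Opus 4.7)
The plan is to reduce the lemma to a combinatorial statement about how $f$ acts on the pairwise disjoint cyclically ordered collection $\mathcal{J}:=\bigcup_{K\in\cK}\cE_K$. First I would locate $y_1$ and $x_2$ precisely inside $\mathcal{J}$. Since $y_1\in K_1\subset\bigcup_{I\in\cE_{K_1}}I$ by \eqref{eq:kiinei}, $y_1$ belongs to some $I_{K_1,j_1}=[x_{K_1,j_1},y_{K_1,j_1}]\in\cE_{K_1}$, and I would show $y_1=y_{K_1,j_1}$: otherwise $y_{K_1,j_1}$ is strictly counterclockwise of $y_1$ inside $I_{K_1,j_1}$; since $x_2\in K_2$ and property (b) gives $I_{K_1,j_1}\cap K_2=\emptyset$, we have $x_2\notin I_{K_1,j_1}$, forcing $y_{K_1,j_1}\in(y_1,x_2)\cap K_1$, against the hypothesis. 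Symmetrically $x_2=x_{K_2,j_2}$ for some $I_{K_2,j_2}\in\cE_{K_2}$. No other member $I'$ of $\mathcal{J}$ can meet $(y_1,x_2)$: pairwise disjointness of $\mathcal{J}$ together with $y_1\in I_{K_1,j_1}$ and $x_2\in I_{K_2,j_2}$ prevents $I'$ from containing $y_1$ or $x_2$, so one would need $I'\subset(y_1,x_2)$; but then the endpoints of $I'$ would lie in some $K'\in\cK$ inside $(y_1,x_2)$, contradicting the hypothesis. Hence $I_{K_1,j_1}$ and $I_{K_2,j_2}$ are cyclically adjacent in $\mathcal{J}$, with $I_{K_1,j_1}$ immediately preceding $I_{K_2,j_2}$.

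Next I would combine Lemma \ref{lemma:00002} with the topology of $f$. By that lemma there exist $k_1,k_2$ with $f(I_{K_1,j_1})\subset I_{K_1,k_1}$ and $f(I_{K_2,j_2})\subset I_{K_2,k_2}$, and in fact $f$ induces a bijection of $\mathcal{J}$ preserving each subfamily $\cE_K$. Since $f\in\homS$ preserves or reverses the cyclic order of $\bS^1$ and the members of $\mathcal{J}$ are pairwise disjoint, this bijection preserves or reverses the cyclic order of $\mathcal{J}$. To make this rigorous I would pass to a lift $\tilde f\colon\bR\to\bR$ and lift $\mathcal{J}$ to a $\bZ$-periodic sequence of disjoint closed intervals in $\bR$; monotonicity of $\tilde f$ forces it to act on the indexing of these lifted intervals as a pure shift (orientation-preserving case) or a reverse-and-shift (orientation-reversing case). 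In either case $I_{K_1,k_1}$ and $I_{K_2,k_2}$ remain cyclically adjacent in $\mathcal{J}$.

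Finally, transporting $[y_1,x_2]$ through the lift, $\tilde f([\tilde y_1,\tilde x_2])$ is a closed real interval whose two endpoints lie in the adjacent lifted members of $\mathcal{J}$ corresponding to $I_{K_1,k_1}$ and $I_{K_2,k_2}$, so on projection $f([y_1,x_2])\subset I_{K_1,k_1}\cup\Gamma\cup I_{K_2,k_2}$, where $\Gamma$ is the open arc of $\bS^1$ separating these two adjacent members of $\mathcal{J}$. Property (b) of $\cE_{K_1}$ and $\cE_{K_2}$ gives $(I_{K_1,k_1}\cup I_{K_2,k_2})\cap K=\emptyset$ for every $K\in\cK\backslash\{K_1,K_2\}$, and $\Gamma\subset\bS^1\backslash\bigcup_{I\in\mathcal{J}}I\subset\bS^1\backslash\bigcup_{K\in\cK}K$ meets no minimal set at all. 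The conclusion $f([y_1,x_2])\cap K=\emptyset$ follows. The hardest point is this last inclusion: a priori the image arc from $f(y_1)$ to $f(x_2)$ could wrap almost the entire circle and sweep across many other minimal sets, and it is precisely the lift that converts the cyclic adjacency of $I_{K_1,k_1}$ and $I_{K_2,k_2}$ into an unambiguous monotone confinement in $\bR$.
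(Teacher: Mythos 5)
Your argument is correct in the substantive case and takes a genuinely different route from the paper's. The paper proves the lemma by contradiction in a few lines: if $f([y_1,x_2])$ met some $K\in\cK\backslash\{K_1,K_2\}$, then --- because the endpoints $f(y_1),f(x_2)$ lie in $K_1\cup K_2$, which is disjoint from every member of $\cE_K$ --- a whole interval $I\in\cE_K$ would be contained in $f([y_1,x_2])$; the surjectivity half of Lemma \ref{lemma:00002} supplies $J\in\cE_K$ with $f(J)\subset I$, hence $J\subset[y_1,x_2]$ and $(y_1,x_2)\cap K\neq\emptyset$, a contradiction. You instead push the interval forward rather than pulling an interval back: you identify $[y_1,x_2]$ as the closed gap between two cyclically adjacent members of $\bigcup_{K}\cE_K$, upgrade Lemma \ref{lemma:00002} to a cyclic-order-preserving (or -reversing) bijection of that family, and use a lift to confine the image between the two adjacent image intervals. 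Your route is longer, but it yields a strictly stronger conclusion --- $f([y_1,x_2])\subset I_{K_1,k_1}\cup\Gamma\cup I_{K_2,k_2}$ with $\Gamma$ a gap meeting no minimal set --- which is essentially the refinement the paper later uses without separate proof in the second half of Theorem \ref{teo:000000004} (the claim that $f(I)\subset I(f)$, $f(J)\subset J(f)$ with the image gap again free of minimal sets). The lift is indeed the right tool for ruling out a ``wrap-around'' image arc, and the bijectivity of $j\mapsto k_j$ that you need is available from the identity $\{k_j^-\colon j=1,\ldots,\ell_K\}=\{1,\ldots,\ell_K\}$ established inside the proof of Lemma \ref{lemma:00002}.

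One small omission: the statement does not require $K_1\neq K_2$, and Proposition \ref{Prop:000001} does invoke the lemma in situations where $K_1=K_2$ can occur. In that case your first step breaks down (you deduced $x_2\notin I_{K_1,j_1}$ from $I_{K_1,j_1}\cap K_2=\emptyset$), but the conclusion is then immediate: $[y_1,x_2]$ satisfies (a') and (b'), so by maximality it lies in a single member of $\cE_{K_1}$, and Lemma \ref{lemma:00002} places its image inside another member of $\cE_{K_1}$, which is disjoint from every other minimal set. Add that line and the proof is complete.
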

\begin{proof}
    Consider $K_1,K_2\in \cK$, $y_1\in K_1$ and $x_2\in K_2$ such that $(y_1,x_2)\cap K=\emptyset$ for all $K\in \cK$. Let us proceed by contradiction. Given $f\in\cF$. If $f([y_1,x_2])\cap K\neq \emptyset$ for some $K\in \cK\backslash\{K_1,K_2\}$, then there exists $I\in\cE_K$ such that $I\subset f([y_1,x_2])$. By Lemma \ref{lemma:00002}, there exists $J\in\cE_K$ such that $f(J)\subset I\subset f([y_1,x_2])$. Since $f$ is a homeomorphism, $J\subset [y_1,x_2]$. Hence, $(y_1,x_2)\cap K\neq\emptyset$  which contradicts the hypotheses. The Lemma is proven.
\end{proof}
Now, we are ready to prove Proposition \ref{Prop:000001}.

\begin{proof}[Proof of Proposition \ref{Prop:000001}]
Let $x\in \bS^1$. If $x\in K$, for some $K\in\cK$, it is clear that for all $\omega\in\cF^\bN$, $f_{\omega}^n(x)\in K$ for all $n\in\bN$. Hence, the proposition follows by taking $K_1=K_2=K$. If $x\notin \cup_{K\in\cK} K$, there exist $K_1$, $K_2\in \cK$ and $y_1\in K_1$, $x_2\in K_2$, such that $(y_1,x)\cap K=(x,x_2)\cap K=\emptyset$, for all $K\in\cK$.
By Lemma \ref{lemma:00003}, we get that for all $f\in\cF$, $f(y_1,x_2)\cap K=\emptyset$ for all $K\in\cK\backslash\{K_{1},K_{2}\}$. Therefore, the cluster set of the sequence $(f_\omega^n(x))_{n\in\bN}$ does not intercect any $K\in \cK\backslash \{K_{1},K_{2}\}$. The proposition is proven.    
\end{proof}

\section{Weight maps}\label{sec:w-maps}

Throughout this section, $(\cF,\nu)$ is an RDS of circle homeomorphisms without finite orbits. Let $K_1,\ldots, K_{ d }$ be the respective topological supports of the ergodic stationary measures $\mu_1,\ldots,\mu_d$. 
In our context, $\cK=\{K_1,\ldots, K_{ d }\}$ is the collection of the closed $\cF$-invariant minimal sets.

The random process $(X_n)_{n\geq 0}$ defined by 
\[
X_n(\omega)=f_n(X_{n-1}), 
\]
with $\omega=(f_n)_{n\in\bN}\in \cF^\bN$ and $X_0$ being a random variable taking values in $\bS^1$, is a Markov chain on the probability space $(\cF^\bN,\nu^\bN)$ with state space $\bS^1$. Due to the compactness of $\bS^1$, we can apply \cite[Lemma 2.5]{Fur:63} to get that for every $x\in\bS^1$ and $\nu^\bN$-almost every $\omega\in\cF^\bN$ of weak-$*$ cluster values of the sequence of probability measures in \eqref{eq:seqaverage} is constituted of stationary probability measures of the RDS. In particular, when $d=1$, for each $x\in\bS^1$ the sequence in \eqref{eq:seqaverage} converges to the unique stationary measure in the weak-$*$ topology.

Define the \emph{weight maps} $u_i:\bS^1\to[0,1]$, $1\leq i \leq  d $, by
\begin{equation}\label{eq:def-u_i}
u_i(x)= \nu^\bN\left(\omega\in\cF^\bN\colon \mbox{cl}\left((f_\omega^n(x))_{n \geq 0}\right)=K_i\right),
\end{equation}
where for a set $A\subset\bS^1$, $\mbox{cl}(A)$ denotes the set of cluster values of $A$. By invariance of the sets in $\cK$, we have $u_i = \delta_{i,j}$ on $K_j$. By \cite[Proposition 4.9]{Mal:17}, for every $x\in\bS^1$ 
\begin{equation}\label{eq:sum-ui}
    \sum_{i=1}^d u_i(x)=1,
\end{equation}
and
\[
u_i(x)=\nu^\bN\left(\omega\in\cF^\bN\colon \frac{1}{N} \sum_{n=0}^{N-1}\delta_{f_\omega^n(x)}\mbox{ weakly-}* \mbox{ converges to } \mu_i\right),
\]
where $f_\omega^0(x)=x$. Moreover, for every $x\in \bS^1$ the sequence in \eqref{eq:seqaverage} converges in the weak-$*$ topology to the probability measure $m_x$ being as in Theorem \ref{teo:01}.

The \emph{transfer operator} $P$ associated to $(\cF,\nu)$ is defined on the space of the measurable bounded functions $\varphi:\bS^1\to \bR$ as follows
\[
P\varphi =  \int_{\cF}\varphi\circ f\,d\nu(f).
\]
In the context of Markov chains, the operator $P$ is known as the Markov operator of Markov chain $(X_n)_{n\geq 0}$. The iterates of $P$ are given by
\[
P^n\varphi =\int_{\cF^\bN}\varphi\circ f_{\omega}^n\,d\nu^\bN(\omega),
\]
so that the dynamic of $P$ represents the evolution of the law of the random variables $X_n$. Note that  for all $i\in\{1,\ldots,d\}$ the weight map $u_i$ defined in \eqref{eq:def-u_i} is $P$-invariant, i.e., $Pu_i=u_i$.
\begin{lemma}\label{lemma:ui=ouui=0}
    Let $(\cF,\nu)$ be an RDS of circle homeomorphisms without finite orbits.
    Let $d$ be the number of ergodic stationary measures and consider the weight maps $u_i$, $i=1,\ldots,d$ as in \eqref{eq:def-u_i}. Then, for all $i\in\{1,\ldots,d\}$, the sets
    \[
    \{u_i=0\} \quad \mbox{and}\quad \{u_i=1\}
    \]
    are $\cF$-invariant.
\end{lemma}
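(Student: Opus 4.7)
The plan is to combine the $P$-invariance of $u_i$, recorded just before the lemma, with continuity of $u_i$ on $\bS^1$. Continuity is not immediate from the definition in \eqref{eq:def-u_i}, but it is a quick consequence of the already-recalled continuity of the map $x \mapsto m_x$: since $K_1, \ldots, K_d$ are pairwise disjoint compact sets, for each $i$ I would pick a continuous bump $\phi_i : \bS^1 \to [0,1]$ that equals $1$ on $K_i$ and $0$ on every other $K_j$, so that $\int \phi_i \, d\mu_j = \delta_{i,j}$. Using the decomposition $m_x = \sum_j u_j(x) \mu_j$ recalled in the paragraph above, this yields $u_i(x) = \int \phi_i \, dm_x$, and continuity of $u_i$ follows from continuity of $x \mapsto m_x$ in the weak-$*$ topology.

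With continuity in hand, both $A_0 := \{u_i = 0\}$ and $A_1 := \{u_i = 1\}$ are closed subsets of $\bS^1$. Fix $x_0 \in A_0$ and unfold the identity $Pu_i = u_i$:
\[
0 \;=\; u_i(x_0) \;=\; Pu_i(x_0) \;=\; \int_{\cF} u_i(f(x_0)) \, d\nu(f).
\]
Since $u_i \geq 0$, the preimage $B := \{f \in \homS : f(x_0) \in A_0\}$ has full $\nu$-measure. As $A_0$ is closed and evaluation at $x_0$ is continuous on $\homS$ (endowed with the uniform topology), $B$ is closed, so it must contain $\text{supp}(\nu) = \cF$; hence $f(x_0) \in A_0$ for every $f \in \cF$, proving $\cF$-invariance of $A_0$.

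For $A_1$ the argument is entirely symmetric, using $u_i \leq 1$ together with $u_i(x_0) = 1 = \int_\cF u_i(f(x_0)) \, d\nu(f)$ to force $\{f : f(x_0) \in A_1\}$ to have full $\nu$-measure, and then extending by the same support-closure argument. Alternatively, one can reduce to the previous case via \eqref{eq:sum-ui}: the identity $\sum_{j=1}^d u_j \equiv 1$ turns $A_1 = \{u_i = 1\}$ into $\bigcap_{j \neq i} \{u_j = 0\}$, and $\cF$-invariance of each $\{u_j = 0\}$ transports to $A_1$. The only step requiring any care is the continuity of $u_i$; everything else is routine topological-support manipulation.
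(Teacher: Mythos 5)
Your proposal is correct and follows essentially the same route as the paper's proof: unfold $Pu_i=u_i$, use $0\le u_i\le 1$ to get $u_i(f(x_0))\in\{0\}$ (resp.\ $\{1\}$) for $\nu$-a.e.\ $f$, and then upgrade to all $f\in\cF$ via continuity and $\mathrm{supp}(\nu)=\cF$. The only difference is cosmetic: you additionally justify the continuity of $u_i$ via a bump-function computation from the weak-$*$ continuity of $x\mapsto m_x$, whereas the paper takes this continuity as already established.
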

\begin{proof}
    Let us show the $\cF$-invariance of $\{u_i=1\}$. For $\{u_i=0\}$, the proof is analogous. We need to prove that for all $f\in\cF$
    \begin{align}\label{eq-01:teo1}
    f(\{x\in\bS^1\colon u_i(x)=1\})\subset \{x\in\bS^1\colon u_i(x)=1\}.
    \end{align}
    In fact, by the $P$-invariance, for $x$ such that $u_i(x)=1$ we get
    \begin{align*}
        1=u_i(x)&=Pu_i(x)=\int_\cF u_i(f(x))\,d\nu(f).
    \end{align*}
    Since $0\leq u_i\leq 1$ we get $u_i(f(x))=1$ for $\nu$-almost every $f\in\cF$. Using the continuity of $u_i$ and the fact that the support of $\nu$ is exactly $\cF$, we get $u_i(f(x))=1$ for all $f\in\cF$ and so \eqref{eq-01:teo1} holds.
\end{proof}

Let us establish a first result using what we discussed in Section \ref{sec:topprop}.

\begin{proposition}\label{prop1}
    Let $(\cF,\nu)$ be an RDS of circle homeomorphisms without finite orbits. 
    Let $d$ be the number of ergodic stationary measures. Let $\cK=\{K_1,\ldots, K_{ d }\}$ be the collection of the closed $\cF$-invariant minimal sets.
    Consider $u_i$ as in \eqref{eq:def-u_i} for $i=1,\ldots,d$. Then, for every $x\in\bS^1$ there exist $i_1,i_2\in\{1,\ldots,d\}$, $x_1\in K_{i_1}$, $x_2\in K_{i_2}$, such that $x\in[x_1,x_2]$, and for all $j\in\{1,\ldots,d\}\backslash\{i_1,i_2\}$ we have
    $u_j(x)=0$ and $[x_1,x_2]\cap K_j=\emptyset$.
\end{proposition}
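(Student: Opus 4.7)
My plan is to choose $x_1$ and $x_2$ based on the position of $x$ relative to $\bigcup_{K\in\cK} K$, and then derive $u_j(x)=0$ from Proposition \ref{Prop:000001}.

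I split into two cases. If $x\in K_i$ for some $i$, I take $i_1=i_2=i$ and $x_1=x_2=x$, so $[x_1,x_2]=\{x\}$. The pairwise disjointness of the minimal sets yields $x\notin K_j$, whence $[x_1,x_2]\cap K_j=\emptyset$ for all $j\neq i$; and the $\cF$-invariance of $K_i$ gives $f_\omega^n(x)\in K_i$ for every $\omega$ and every $n$, so the cluster set of $(f_\omega^n(x))_{n\geq 0}$ lies in $K_i$ and cannot equal the nonempty set $K_j$. By the definition \eqref{eq:def-u_i} this forces $u_j(x)=0$.

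If instead $x\notin\bigcup_{K\in\cK} K$, then since this union is closed I let $x_1$ be the first point of $\bigcup_{K\in\cK} K$ met on moving clockwise from $x$, and $x_2$ the first met counterclockwise; let $i_1,i_2\in\{1,\ldots,d\}$ satisfy $x_1\in K_{i_1}$ and $x_2\in K_{i_2}$. By construction $(x_1,x_2)\cap K=\emptyset$ for every $K\in\cK$, so $x\in(x_1,x_2)\subset[x_1,x_2]$ and $[x_1,x_2]\cap K_j=\emptyset$ for every $j\notin\{i_1,i_2\}$. For the weight condition I invoke Proposition \ref{Prop:000001} applied to $x$: its proof selects precisely these two nearest minimal sets, and (as highlighted in the remark following that proposition and as is what the last line of its proof actually gives) yields the stronger conclusion that for every $\omega\in\cF^{\bN}$, the cluster set of $(f_\omega^n(x))_{n\geq 0}$ is disjoint from every $K\in\cK\setminus\{K_{i_1},K_{i_2}\}$. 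Since $K_j$ is nonempty, this cluster set can never equal $K_j$ when $j\notin\{i_1,i_2\}$, and \eqref{eq:def-u_i} gives $u_j(x)=0$.

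The only subtle point to track is the passage from the orbit-level formulation of Proposition \ref{Prop:000001} to the sequence-cluster-set version used here; this is what its proof actually establishes, so no further work is needed. Apart from that, the proof reduces to the topological choice of $x_1,x_2$ together with a direct bookkeeping application of Proposition \ref{Prop:000001}.
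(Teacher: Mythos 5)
Your proof is correct and follows essentially the same route as the paper's: both rest entirely on Proposition \ref{Prop:000001} (in its cluster-set form, which is what its proof actually establishes), choose $x_1,x_2$ as the nearest points of $\bigcup_{K\in\cK}K$ on either side of $x$, and then read off $u_j(x)=0$ from the definition \eqref{eq:def-u_i}. The only cosmetic differences are that you make the two cases ($x$ in a minimal set or not) and the choice of $x_1,x_2$ explicit, and you get $u_j(x)=0$ directly from the empty event rather than via \eqref{eq:sum-ui}; neither changes the substance.
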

\begin{proof}
    When $d=1$, the result it is evident by taking $i_1=i_2=1$ for all $x\in\bS^1$ because $u_1\equiv 1$. Now, consider $d\geq 2$.
    Since the collection $\cK=\{K_1,\ldots, K_{ d }\}$ of the closed $\cF$-invariant minimal sets is a finite set, we can apply Proposition \ref{Prop:000001} to get that for every $x\in\bS^1$ there exist $i_1,i_2\in\{1,\ldots,d\}$ such that for all $\omega\in \cF^\bN$ the cluster set of the sequence $(f_\omega^n(x))_{n\in\bN}$ does not intercect any element of $\cK\backslash\{K_{i_1},K_{i_2}\}$. In particular, we can find $x_1\in K_{i_1}$, $x_2\in K_{i_2}$, such that $x\in[x_1,x_2]$ and $[x_1,x_2]\cap K_j=\emptyset$ for all $j\in\{1,\ldots,d\}\backslash\{i_1,i_2\}$.  By \eqref{eq:sum-ui}, using the definition of the maps $u_i$ we get that $u_{i_1}(x)+u_{i_2}(x)=1$ (case $i_1\neq i_2$) or $u_{i_1}(x)=1$ (case $i_1= i_2$). Hence, $u_j(x)=0$ for all $j\in\{1,\ldots,d\}\backslash\{i_1,i_2\}$.
    \end{proof}

The main result of this section establishes more precisely the properties of the collections of closed intervals mentioned in Theorem \ref{teo:01}. Beyond this, we show that if $i$ and $j$ are neighbors (see Section \ref{sec:neighbors}, for a definition), then the functions $u_i$ and $u_j$ are monotone on any interval contained in $\{u_i>0\}\cap\{u_j>0\}$.

\begin{theorem}\label{teo:000000004}
    Let $(\cF,\nu)$ be an RDS of circle homeomorphisms without finite orbits. Let $d$ be the number of ergodic stationary measures. Consider $u_i$ as in \eqref{eq:def-u_i} for $i=1,\ldots,d$. Then, for all $i\in\{1,\ldots,d\}$ there exists a finite collection of disjoint closed intervals $\cA_i$ such that
    \begin{enumerate}[label= \arabic*)]
    \item[(a)] $K_i\subset \{x\in\bS^1\colon u_i(x)=1\}=\cup_{I\in\cA_i} I$;
        \item[(b)] for all $I,J\in\cA_i$ there exists $f\in\cF$ such that $f(I)\subset J$; and
        \item[(c)] for $I,J\in\cA_i$, $I\neq J$, if $x\in I$ and $y\in J$ then both intervals $[x,y]$ and $[y,x]$ of $\bS^1$ intersect the set $ \cup_{j\neq i}K_j$.
    \end{enumerate}
    Moreover, assuming $d\geq 2$, for $i,j\in\{1,\ldots,d\}$, $i\neq j$ and $I=[x_i,y_i]\in\cA_i$, $J=[x_j,y_j]\in\cA_j$, such that 
    \[
    (y_i,x_j)\cap \cup_{r=1}^d K_r=\emptyset,
    \]
    we have that $u_i$ is a monotone decreasing function on $[y_i,x_j]$ and $u_j$ is a monotone increasing function on $[y_i,x_j]$.
\end{theorem}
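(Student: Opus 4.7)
The plan is to define $\cA_i$ as the set of connected components of the closed and (by Lemma \ref{lemma:ui=ouui=0}) $\cF$-invariant set $\{u_i=1\}$, and then to verify properties (a), (b), (c) and the final monotonicity statement in turn, leveraging the topological scaffold from Section \ref{sec:topprop}.

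For (a), I first show $K_i\subset\bigcup_{I\in\mathcal{E}_{K_i}}I\subset\{u_i=1\}$. Fix $I=[x,y]\in\mathcal{E}_{K_i}$ and $z\in(x,y)$: by the defining conditions of $\mathcal{E}_{K_i}$, the two closest points of $\bigcup_r K_r$ on either side of $z$ lie in $K_i$, so the argument of Proposition \ref{Prop:000001} applied at $z$ yields that every cluster value of $(f_\omega^n(z))_n$ lies in $K_i$ for every $\omega\in\cF^{\bN}$, whence $u_i(z)=1$. Continuity extends this to the endpoints, and \eqref{eq:kiinei} gives the first inclusion. Conversely, any $z\in\{u_i=1\}$ that is not in $\bigcup\mathcal{E}_{K_i}$ must lie in one of the ``boundary arcs'' joining an endpoint of some $I\in\mathcal{E}_{K_i}$ to a point of another $K_l$ with $l\neq i$; there are only finitely many such arcs by Lemma \ref{lemma:0001}. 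This produces finitely many components, each a closed interval, and gives (a).

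For (c), if $I\neq J$ were two components of $\{u_i=1\}$ separated by an open arc disjoint from $\bigcup_{l\neq i}K_l$, then Proposition \ref{prop1} applied on that arc would allow only $K_i$ as a reachable minimal set, forcing $u_i\equiv 1$ there, a contradiction. For (b), each $f\in\cF$ sends $\{u_i=1\}$ into itself (Lemma \ref{lemma:ui=ouui=0}) and, being a homeomorphism, maps each connected component into a single component, so the finite set $\cA_i$ carries an induced semigroup action of $G_\cF$; since every element of $\cA_i$ meets $K_i$ and $K_i$ is $G_\cF$-minimal, this action is transitive, and for any $I,J\in\cA_i$ some word in $\cF$ produces $f\in G_\cF$ with $f(I)\subset J$.

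The monotonicity statement is where I expect the real work. By Proposition \ref{prop1} applied at each $z\in[y_i,x_j]$, only $K_i$ and $K_j$ are reachable from $z$, so $u_i+u_j\equiv 1$ on this arc and the two claimed monotonicities are equivalent. To prove $u_i$ is decreasing, the plan is to combine the probabilistic interpretation $u_i(z)=\nu^{\bN}\{\omega:\mathrm{cl}(f_\omega^n(z))=K_i\}$ with a coupling argument: applying a common random map to two ordered points $z_1<z_2$ in the arc preserves cyclic order, so the two trajectories either remain in a common elementary arc of $\bS^1\setminus\bigcup_r K_r$ or are absorbed by $K_i$ or $K_j$ in a compatible order (the point closer to $K_j$ cannot reach $K_i$ strictly before the one closer to $K_i$ does). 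Turning this observation into the inequality $u_i(z_1)\geq u_i(z_2)$ is the main difficulty; the hard part is bookkeeping absorption probabilities as the coupled pair wanders through different elementary arcs, and, when $\cF$ contains orientation-reversing elements that locally invert the comparison, tracking parity along the random word and averaging over the resulting cases.
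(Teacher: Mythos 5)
Your construction of $\cA_i$ as the connected components of $\{u_i=1\}$, and your arguments for (a) and (c), follow essentially the paper's route, but two steps are not actually closed. First, finiteness of $\cA_i$: ``the exceptional points lie in finitely many boundary arcs, hence there are finitely many components'' is a non sequitur, since a priori $\{u_i=1\}$ could have infinitely many components inside a single complementary arc of $\bigcup_{I\in\mathcal{E}_{K_i}}I$. The paper instead observes that between any two consecutive components of $\{u_i=1\}$ there must lie a point of some $K_j$ with $j\neq i$ (via Lemma \ref{lemma:00003}), and then rules out infinitely many components by compactness: shrinking components would accumulate at a point of $\{u_i=1\}$, forcing a sequence in a fixed $K_j$ to converge to a point where $u_i=1$, contradicting $u_i=0$ on $K_j$. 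You need some version of this accumulation argument. (Minor point on (b): the statement asks for $f\in\cF$, whereas your transitivity argument only produces $f\in G_\cF$; the paper's own route is the permutation argument of Lemma \ref{lemma:00002}.)

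The more serious gap is the ``Moreover'' part, which you explicitly leave unresolved (``turning this observation into the inequality \dots is the main difficulty''). No bookkeeping of absorption probabilities and no averaging over orientation parities is needed, because the inequality comes from a pathwise inclusion of events. For each single $f\in\cF$ there are $I(f)\in\cA_i$ and $J(f)\in\cA_j$ with $f(I)\subset I(f)$, $f(J)\subset J(f)$, and the arc between $I(f)$ and $J(f)$ free of all minimal sets, in one cyclic order or the other according to whether $f$ reverses orientation; an orientation-reversing $f$ swaps the two sides coherently rather than ``locally inverting the comparison''. Iterating, for every fixed $\omega$ and every $y\in[y_i,x]$, the point $f_\omega^n(y)$ remains trapped between $\bigcup_{I\in\cA_i}I$ and $f_\omega^n(x)$ on the $K_i$ side for all $n$. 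Hence
\[
\left\{\omega\colon \mathrm{cl}\left((f_\omega^n(x))_{n\geq 0}\right)=K_i\right\}\subset\left\{\omega\colon \mathrm{cl}\left((f_\omega^n(y))_{n\geq 0}\right)=K_i\right\}
\]
deterministically, and $u_i(x)\leq u_i(y)$ follows by taking $\nu^{\bN}$-measure; monotonicity of $u_j$ then comes for free from $u_i+u_j=1$ on $[y_i,x_j]$. As written, your proposal does not prove the monotonicity claim, which is the substantive part of the theorem.
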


Note that  $(y_i,x_j)\subset \{u_i>0\}\cap\{u_j>0\}$. Moreover, the set $\{u_i>0\}\cap\{u_j>0\}$ is a finite union of open intervals with an extremal point in $\cup_{I\in\cA_i} I$ and the other extremal point in $\cup_{J\in\cA_j} J$.

\begin{proof}[Proof of Theorem \ref{teo:000000004}]
When $d=1$, the theorem it is clear taking $\cA_1=\{\bS^1\}$. Assume that $d\geq 2$.
    Let $i=1,\ldots,d$. Recall that $u_i:\bS^1\to[0,1]$ is a continuous function. Hence, $\{x\in\bS^1\colon u_i(x)=1\}$ is a closed subset of $\bS^1$ and there exists a collection $\cA_i $ of disjoint closed intervals such that
    \begin{equation}\label{set:ui=1}
        \{x\in\bS^1\colon u_i(x)=1\}=\cup_{I\in\cA_i}I.
    \end{equation}
    The continuity of $u_i$ implies that $\cup_{I\in\cA_i}I$ is a closed (and hence compact) subset of $\bS^1$.
    Let us show that $\cA_i $ is finite. Note that for $I=[x_I,y_I], J=[x_J,y_J]\in\cA_i$ such that 
    \[
    (y_I,x_J)\cap \left(\cup_{A\in\cA_i}A\right)=\emptyset,
    \]
    we have that $u_i(x)<1$ for all $x\in (y_I,x_J)$. Using \eqref{eq:sum-ui} and the definition of maps $u_j$, $j\in\{1,\ldots,d\}$, we get that there exists $j\neq i$ such that $u_j(x)=1-u_i(x)>0$ for all $x\in (y_I,x_J)$. By Lemma \ref{lemma:00003}, it is not difficult to see that $K_j\cap (y_I,x_J)\neq \emptyset$. Using that the set in \eqref{set:ui=1} is compact, since $\cA_i$ is an infinite collection of closed intervals of $\bS^1$, we can find a infinite sequence $(I_n=[x_n,\hat x_n])_{n\in\bN}$ such that 
    \[
    \lim_{n\to\infty}\text{diam}(I_n)=0\quad \text{and}\quad \lim_{n\to\infty}x_n=\lim_{n\to\infty}\hat x_n=x,
    \]
    for some $x\in \cup_{I\in\cA_i}I$. Also assume without loss of generality that for all $n\in\bN$ and $m\geq 2$, $\langle x_n,x_{n+1},\ldots,x_{n+m}\rangle\in\CO$. By definition of $\cA_i$, for all $n\in\bN$ there exist $i_n\in\{1,\ldots,d\}\backslash\{i\}$ and $y_n\in K_{i_n}\cap (\hat x_n,x_{n+1})$. Note that $(y_n)_{n\in\bN}$ is a sequence converging to $x$. Take a subsequence $(i_{n_k})_{k\in\bN}$ of $(i_n)_{n\in\bN}$ such that for some $j\in\{1,\ldots,d\}\backslash\{i\}$ and for all $k\in\bN$, $i_{n_k}=j$. Then, $(y_{n_k})_{k\in\bN}$ is a sequence in $K_j$ converging to a point in $K_i$, which contradicts the compactness of $K_j$. Consequently, $\cA_i$ is finite.

    Now, let us show items (a), (b), and (c). Item (a) is obvious by the definition of the map $u_i$ and the $\cF$-invariance of $K_i$. Using that $\cA_i$ is finite and proceeding analogously in the proof of Lemma \ref{lemma:00002}, we note that it is sufficient to show the $\cF$-invariance of $\{x\in\bS^1\colon u_i(x)=1\}$ to conclude item (b).  Finally, apply Lemma \ref{lemma:00003} and the fact that every $I\in\cA_i$ intercects $K_i$ to conclude item (c).

    To prove the second part of Theorem \ref{teo:000000004}, consider $i,j\in\{1,\ldots,d\}$, $i\neq j$ and $I=[x_i,y_i]\in\cA_i$, $J=[x_j,y_j]\in\cA_j$, such that 
    \[
    (y_i,x_j)\cap \cup_{r=1}^d K_r=\emptyset.
    \]
    By Proposition \ref{prop1}, for $x\in [y_i,x_j]$ we have $u_i(x)+u_j(x)=1$.
    Let us show that $u_i$ is a monotone decreasing function on $[y_i,x_j]$. Note that for $f\in\cF$ there exist $I(f)=[x_i(f),y_i(f)]\in\cA_i$, $J(f)=[x_j(f),y_j(f)]\in\cA_j$, such that $f(I)\subset I(f)$, $f(J)\subset J(f)$ and
    \[
    (y_i(f),x_j(f))\cap \cup_{r=1}^d K_r=\emptyset,\mbox{ or }(y_j(f),x_i(f))\cap \cup_{r=1}^d K_r=\emptyset.
    \]
    The last fact will depend on whether $f$ preserves orientation or not. Therefore, for $x\in [y_i,x_j]$ and $\omega \in\cF^\bN $ such that $\mbox{cl}\left((f_\omega^n(x))_{n \geq 0}\right)=K_i$, we get that for all $y\in [y_i,x]$ we also have $\mbox{cl}\left((f_\omega^n(y))_{n \geq 0}\right)=K_i$. Hence, 
    \[
    \{\omega\in\cF^\bN\colon \mbox{cl}\left((f_\omega^n(x))_{n \geq 0}\right)=K_i\}\subset \{\omega\in\cF^\bN\colon \mbox{cl}\left((f_\omega^n(y))_{n \geq 0}\right)=K_i\},
    \]
    for every $x\in [y_i,x_j]$ and $y\in [y_i,x]$. Consequently, for $x,y\in [y_i,x_j]$, $\langle y_i,x,y,x_j\rangle\in\CO$, we have that
    \[
    u_i(x)\leq u_i(y).
    \]
    Which implies the desired. The monotonicity of $u_j$ is a immediate consequence of $u_i+u_j=1$ on $[y_i,x_j]$.
\end{proof}
Let us prove item \textit{(iii)} in Theorem \ref{teo:01} as a corollary of Theorem \ref{teo:000000004}.
\begin{corollary}\label{cor:constants}
 Let $(\cF,\nu)$ be an RDS of circle homeomorphisms without finite orbits. Let $d$ be the number of ergodic stationary measures. Consider $u_i$ as in \eqref{eq:def-u_i} for $i=1,\ldots,d$. If $u_i|_I\equiv C$ for some interval $I\subset\bS^1$ and $C\in[0,1]$, then for every $f\in\cF$ there exists $c_f\in[0,1]$ such that $u_i|_{f(I)}\equiv c_f$.
\end{corollary}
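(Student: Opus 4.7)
The cases $C = 0$ and $C = 1$ fall out directly from Lemma \ref{lemma:ui=ouui=0}: the sets $\{u_i = 0\}$ and $\{u_i = 1\}$ are $\cF$-invariant, so $f(I) \subset \{u_i = C\}$ and one takes $c_f = C$. I would dispose of these first.

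For the substantive case $0 < C < 1$, the interval $I$ lies inside the open $\cF$-invariant set $\{0 < u_i < 1\}$ and hence inside a single one of its connected components $G$, on which Theorem \ref{teo:000000004} guarantees that $u_i$ is monotone. The key ingredient I would lift from the proof of that theorem is the \emph{deterministic} inclusion
\[
A_i(x) \subset A_i(y), \qquad A_i(z) := \{\omega \in \cF^\bN : \mbox{cl}\bigl((f_\omega^n(z))_{n \geq 0}\bigr) = K_i\},
\]
which holds for \emph{every} $\omega \in \cF^\bN$ whenever $x, y \in G$ and $y$ lies between $x$ and the $K_i$-endpoint of $\overline{G}$---because each $f \in \cF$ carries a closed gap onto another closed gap monotonically. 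For $x, y \in I$ one has $\nu^\bN(A_i(x)) = C = \nu^\bN(A_i(y))$, which upgrades the inclusion to $\nu^\bN(A_i(x) \Delta A_i(y)) = 0$.

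The concluding step transfers this almost-sure equality to each individual $f$ via Fubini. The cocycle identity $(g, \omega) \in A_i(z) \iff \omega \in A_i(g(z))$ (which holds because prepending $g$ shifts the orbit of $z$ by one step without altering its cluster set) identifies the slice of $A_i(x) \Delta A_i(y)$ at first coordinate $g$ with $A_i(g(x)) \Delta A_i(g(y))$. Writing $\nu^\bN = \nu \otimes \nu^\bN$ gives
\[
0 = \nu^\bN\bigl(A_i(x) \Delta A_i(y)\bigr) = \int_\cF \nu^\bN\bigl(A_i(g(x)) \Delta A_i(g(y))\bigr) \, d\nu(g),
\]
so $u_i(g(x)) = u_i(g(y))$ for $\nu$-almost every $g \in \cF$. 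Continuity of $g \mapsto u_i(g(x))$ on $\homS$ together with $\cF = \mathrm{supp}(\nu)$ promote this equality to every $g \in \cF$; letting $x, y$ range over $I$ shows $u_i \circ f$ is constant on $I$ for each $f \in \cF$, with $c_f$ the common value. The main delicate point will be verifying that the inclusion extracted from the proof of Theorem \ref{teo:000000004} holds for every $\omega$ (so the null set is genuinely empty after the measure-equality upgrade), since the Fubini-plus-continuity step is routine once that is in hand.
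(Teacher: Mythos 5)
Your argument is correct and follows essentially the same route as the paper's: dispose of $C\in\{0,1\}$ via Lemma \ref{lemma:ui=ouui=0}, then combine the monotone inclusion of the sets $A_i(\cdot)$ (extracted from the proof of Theorem \ref{teo:000000004}) with stationarity and with continuity plus $\mathrm{supp}(\nu)=\cF$. The only (cosmetic) difference is where the inclusion is invoked: the paper applies it to the image points $f(x),f(y)$ and compares $Pu_{i_1}(x)=Pu_{i_1}(y)$ as integrals of nested sets, whereas you apply it to $x,y$ themselves, deduce $\nu^\bN\bigl(A_i(x)\,\Delta\,A_i(y)\bigr)=0$, and push that null symmetric difference through the slice identity and Fubini --- a mild streamlining that avoids re-examining how each $f$ maps gaps to gaps.
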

\begin{proof}
    Consider $i\in\{1,\ldots,d\}$ and $I=[a,b]$ such that $u_i|_I\equiv C$ for some $C\in[0,1]$. By Lemma \ref{lemma:ui=ouui=0}, when $C\in\{0,1\}$, it is enough to consider $c_f=C$ for all $f\in\cF$. 
    
    Assume $C\in(0,1)$. Since $C<1$, $I\cap K_j=\emptyset$ for all $j\in\{1,\ldots,d\}$. Take $i_1,i_2\in\{1,\ldots,d\}$, $x_1\in K_{i_1}$, $x_2\in K_{i_2}$, such that $I\subset[x_1,x_2]$ and 
    \[
    (x_1,x_2)\cap(\cup_{\ell+1}^{d}K_\ell)=\emptyset.
    \]
    Since $C>0$, $i_1=i$ or $i_2=i$. Also, we have $i_1\neq i_2$. Consider $c_1,c_2>0$ such that $c_1+c_2=1$, $u_{i_1}|_I=c_1$ and $u_{i_2}|_I=c_2$. By the $P$-invarience of the weight maps, we get for $x,y\in I$, $x\leq y$ 
    \begin{align}\label{eq:cor-1-pfx=pfy}
        &\int_\cF \nu^\bN(\omega\in\cF^\bN\colon \mbox{cl}\left((f_\omega^n(f(x)))_{n \geq 0}\right)=K_{i_1})\,d\nu(f)\nonumber\\
        &=Pu_{i_1}(x)=u_{i_1}(x)\nonumber\\
        &=c_{i_1}\\
        &=u_{i_1}(y)=Pu_{i_1}(y)\nonumber\\
        &=\int_\cF \nu^\bN(\omega\in\cF^\bN\colon \mbox{cl}\left((f_\omega^n(f(y)))_{n \geq 0}\right)=K_{i_1})\,d\nu(f).\nonumber
    \end{align}
    By the continuity of the maps in $\cF$, for all $f\in\cF$
    \begin{equation}\label{eq:cor-1-i1}
    \{\omega\in\cF^\bN\colon \mbox{cl}\left((f_\omega^n(f(y)))_{n \geq 0}\right)=K_{i_1}\}\subset\{\omega\in\cF^\bN\colon \mbox{cl}\left((f_\omega^n(f(x)))_{n \geq 0}\right)=K_{i_1}\}.
    \end{equation}
    By \eqref{eq:cor-1-pfx=pfy} and \eqref{eq:cor-1-i1}, we get that for $\nu$-almost every $f\in\cF$ 
    \[
        \nu^\bN(\omega\in\cF^\bN\colon \mbox{cl}\left((f_\omega^n(f(x)))_{n \geq 0}\right)=K_{i_1})= \nu^\bN(\omega\in\cF^\bN\colon \mbox{cl}\left((f_\omega^n(f(y)))_{n \geq 0}\right)=K_{i_1}),
    \]
    i.e., $u_{i_1}(f(x))=u_{i_1}(f(y))$. By the continuity of $u_{i_1}$ and the fact that $\nu $ is supported on $\cF$, we conclude $u_{i_1}(f(x))=u_{i_1}(f(y))$ for all $f\in \cF$. Since $x$ and $y$ are arbitrary in $I$, we get $u_{i_1}|_{f(I)}=c_{1,f}$ for some $c_{1,f}\in[0,1]$. Analogously, we can show that for all $f\in\cF$ we have $u_{i_2}|_{f(I)}=c_{2,f}=1-c_{1,f}$. To conclude this demonstration, it only remains to consider $c_f=c_{k,f}$ for all $f\in\cF$ and $k\in\{1,2\}$ such that $i=i_k$.
\end{proof}

We say that a finite set $P=\{x_0,x_1,\ldots,x_{n_P}\}$, $n_P\geq 2$ is a \emph{partition} of $\bS^1$ if $x_{n_P}=x_0$, $x_i\neq x_{i+1}$ and $P$ appears in the order \(x_1 \to x_2 \to\cdots\to x_{n_P}\) when traversing the circle \(\mathbb{S}^1\) counterclockwise. In particular, when $n_P\geq 2$ we have that $\langle x_1,\ldots,x_{n_p}\rangle$ and $\langle x_0, x_1,\ldots,x_{n_p-1}\rangle$ are $n_P$-tuples of $\CO$. When $n_P=2$, $P$ is a set of the form $\{a,b,a\}$ with $a\neq b$. Let $\cP$ denote the set of the partitions $P$ of $\bS^1$. 

The \emph{total variation} of a real-valued function $\varphi:\bS^1\to \bR$, defined on $\bS^1$ is the quantity
\[
V(\varphi)=\sup_{P\in\cP}\sum_{i=0}^{n_P-1}\vert \varphi(x_i)-\varphi(x_{i+1})\vert.
\]
A function $\varphi:\bS^1\to \bR$ is said to be of \emph{bounded variation} (BV function) on $\bS^1$ if its total variation is finite.

From Theorem \ref{teo:000000004}, we can conclude the bounded variation of the weight maps using their continuity.
\begin{corollary}
    Let $(\cF,\nu)$ be an RDS of circle homeomorphisms without finite orbits. Let $d$ be the number of ergodic stationary measures. Then, for $i=1,\ldots,d$, the map $u_i$ defined \eqref{eq:def-u_i} has bounded variation.
\end{corollary}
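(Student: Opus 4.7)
The plan is to use the finite decomposition of $\bS^1$ furnished by Theorem \ref{teo:000000004} to reduce the total variation of each $u_i$ to a sum of finitely many pieces, each of size $0$ or $1$. The case $d=1$ is trivial since then $u_1\equiv 1$ and $V(u_1)=0$, so I assume $d\geq 2$ and fix an index $i$.

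First, I would consider the finite family $\cA := \cA_1 \cup \cdots \cup \cA_d$ of pairwise disjoint closed intervals, with union $A = \bigcup_{I\in\cA} I \subset \bS^1$. The complement $\bS^1 \setminus A$ has finitely many connected components, whose closures I shall call \emph{gaps}. Each gap has its two endpoints in $A$, and since $K_r \subset \bigcup_{I\in\cA_r} I$ for every $r$, the interior of each gap is disjoint from every minimal set. This produces a finite partition of $\bS^1$ into elements of $\cA$ together with gaps. A short check, using item (c) of Theorem \ref{teo:000000004} when $|\cA_i|\geq 2$, together with the observation that for $d\geq 2$ the set $A$ strictly contains any single $\cA_i$-interval (because $K_j\subset A$ for every $j\neq i$ and $K_j\cap\cA_i=\emptyset$), rules out any gap with both endpoints in the same $\cA_i$.

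The next step is to analyze $u_i$ on each piece. On $I\in\cA_i$ we have $u_i\equiv 1$; on $I\in\cA_j$ with $j\neq i$ the identity $\sum_k u_k=1$ combined with $u_j\equiv 1$ on $I$ forces $u_i\equiv 0$; so every element of $\cA$ contributes $0$ to the variation. For a gap $G$ with endpoints in $\cA_{j_1}$ and $\cA_{j_2}$ (which, by the previous paragraph, come from distinct indices whenever $i\in\{j_1,j_2\}$), two sub-cases arise: if $i\notin\{j_1,j_2\}$, then Proposition \ref{prop1} applied at any $x\in G$ forces $u_i(x)=0$, and $G$ contributes $0$; if exactly one of $j_1,j_2$ equals $i$, then the monotonicity part of Theorem \ref{teo:000000004} shows that $u_i$ is monotone on $G$ taking the endpoint values $1$ and $0$, so $G$ contributes exactly $1$.

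Finally, I would observe that refining any partition $P\in\cP$ of $\bS^1$ so as to include all endpoints of the decomposition above can only increase its variational sum for $u_i$, while the refined sum is precisely a sum of the piecewise contributions above and is therefore bounded by the number of gaps falling into the second sub-case, itself at most $2|\cA_i|$. Taking the supremum over $P$ yields $V(u_i)\leq 2|\cA_i|<\infty$, as required. I do not anticipate any substantive obstacle; the only delicate point is the gap case-analysis and the verification that no gap has both endpoints in $\cA_i$, after which the result is essentially a counting argument.
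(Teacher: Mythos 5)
Your proof is correct and follows essentially the same route as the paper: both reduce to the finite decomposition of $\bS^1$ coming from Theorem \ref{teo:000000004} into pieces where $u_i$ is constantly $0$ or $1$ and finitely many transition intervals where $u_i$ is monotone between $0$ and $1$, and then bound the variation by a telescoping count. Your gap-by-gap case analysis is simply a more explicit justification of the circularly ordered points $a_0,\ldots,a_{4k-1}$ that the paper asserts directly, and it yields the same bound $V(u_i)\leq 2\#\cA_i$.
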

\begin{proof}
    If $d=1$, the corollary follows easily because $u_1$ is constant, implying that $V(u_1)=0$.
    Assume $d\geq 2$ and take $i\in\{1,\ldots,d\}$. By Theorem \ref{teo:000000004}, we can find $\langle a_0,a_1,\ldots,a_m\rangle\in\CO$, with $m=4k-1$ for some $k\in\bN$, such that for $j=1,\ldots,k$
    \begin{itemize}
        \item $u_i=1$ on $[a_{4j-4},a_{4j-3}]$,
        \item $u_i$ is monotone decreasing on $[a_{4j-3},a_{4j-2}]$,
        \item $u_i=0$ on $[a_{4j-2},a_{4j-1}]$, and
        \item $u_i$ is monotone increasing on $[a_{4j-1},a_{4j}]$, where $a_{4k}=a_0$.
    \end{itemize}
    Therefore, for any $P\in \cP$ the sum 
    \[
    \sum_{i=0}^{n_P-1}\vert \varphi(x_i)-\varphi(x_{i+1})\vert
    \]
    can be separated into $m+1=4k$ sums with half of them being 0 and the other half (corresponding to the monotonicity intervals of $u_i$) being telescopic sums. Thus, using $0\leq u_i\leq 1$,
    \[
    \sum_{i=0}^{n_P-1}\vert \varphi(x_i)-\varphi(x_{i+1})\vert\leq 2k.
    \]
    Since $k$ does not depend on the partition $P$, we can conclude the bounded variation of $u_i$.
\end{proof}

%%%%%%%%%%%%%%%%%%%%%%%%%%%%%%%%%%
%%
\section{The neighbors}\label{sec:neighbors}

Use the same notation in Section \ref{sec:w-maps} in this section. For $i,j\in\{1,\ldots,d\}$ we say that $i$ and $j$ are neighbors if $\{u_i>0\}\cap\{u_j>0\}\neq\emptyset$.  See Lemma \ref{lemma:cV} for a nontrivial upper bound on the number of neighbors for $i$.
Define 
\begin{equation}\label{eq:defVF}
    \cV\subset\{1,\ldots,d\}\times\{1,\ldots,d\},
\end{equation}
as the set of pairs $(i,j)$ such that $i$ and $j$ are neighbors. That is, $(i,j)\in \cV$ if and only if there is $x\in\bS^1$ such that $u_i(x)>0$ and $u_j(x)>0$.

Define $\cS$ as the collection of stationary measures that are weak-$*$ limits of the sequence \eqref{eq:seqaverage} for some $x\in \bS^1$. 

Using the topological facts shown in Section \ref{sec:topprop}, let us prove the following results, which immediately imply the first part of Theorem \ref{teo:01}.
\begin{lemma}\label{corollary1:teo1}
    Let $(\cF,\nu)$ be an RDS of circle homeomorphisms without finite orbits.
    Let $d$ be the number of ergodic stationary measures $\mu_1,\ldots, \mu_{ d }$. Then,
    \[
    \cS=\left\{t\mu_i+(1-t)\mu_j\colon t\in[0,1],\,(i,j)\in\cV\right\}.
    \]
\end{lemma}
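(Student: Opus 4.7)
The lemma should follow quickly once we combine the identification $m_x = \sum_i u_i(x)\mu_i$ (established in the discussion preceding Theorem \ref{teo:01}) with the ``at most two nonzero weights'' structure of Proposition \ref{prop1}. The plan is to prove the two inclusions separately, each as a direct consequence of results from Sections \ref{sec:topprop} and \ref{sec:w-maps}.

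For the inclusion $\cS \subset \{t\mu_i + (1-t)\mu_j : t \in [0,1],\,(i,j) \in \cV\}$, I would fix an arbitrary $x \in \bS^1$ and apply Proposition \ref{prop1} to obtain indices $i_1,i_2 \in \{1,\ldots,d\}$ such that $u_k(x) = 0$ for every $k \notin \{i_1,i_2\}$. Combined with \eqref{eq:sum-ui}, this yields $m_x = u_{i_1}(x)\mu_{i_1} + u_{i_2}(x)\mu_{i_2}$. Either both weights are strictly positive, in which case $(i_1,i_2) \in \cV$ directly by the definition \eqref{eq:defVF}, or exactly one is nonzero and equals $1$; in the latter case $m_x = \mu_{i}$ for some $i$, which we write as $1\cdot\mu_i + 0\cdot\mu_i$ using the trivial pair $(i,i) \in \cV$ (trivial because $u_i = 1$ on the nonempty set $K_i$).

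For the reverse inclusion, the diagonal case $i = j$ is handled by choosing any $x \in K_i$, for which $m_x = \mu_i = t\mu_i + (1-t)\mu_i$. For $(i,j) \in \cV$ with $i \neq j$, pick any $x_0$ with $u_i(x_0), u_j(x_0) > 0$. By the remark immediately following Theorem \ref{teo:000000004}, the set $\{u_i > 0\} \cap \{u_j > 0\}$ is a finite union of open intervals whose extremities lie in $\cup_{I \in \cA_i} I$ and $\cup_{J \in \cA_j} J$ respectively; let $(y_i,x_j)$ be the component containing $x_0$. Then the second part of Theorem \ref{teo:000000004} applies to $[y_i,x_j]$, telling us that $u_i$ is continuous and monotone decreasing on this interval with $u_i(y_i) = 1$, $u_i(x_j) = 0$, and $u_i + u_j \equiv 1$ throughout. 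Identifying $[y_i,x_j]$ with a real interval via the orientation of $\bS^1$ and applying the intermediate value theorem, for every $t \in [0,1]$ there exists $x_t \in [y_i,x_j]$ with $u_i(x_t) = t$ and $u_j(x_t) = 1-t$, hence $m_{x_t} = t\mu_i + (1-t)\mu_j \in \cS$.

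There is no genuine obstacle in this argument: the lemma is essentially a packaging of Proposition \ref{prop1} together with the continuity/monotonicity structure of Theorem \ref{teo:000000004}. The only point deserving a brief verification is that the component of $\{u_i > 0\} \cap \{u_j > 0\}$ containing $x_0$ has closure of the precise form $[y_i,x_j]$ required by Theorem \ref{teo:000000004} (i.e., with $(y_i,x_j)$ missing every $K_r$ and with $y_i \in \cA_i$, $x_j \in \cA_j$), which is exactly the content of the structural remark following that theorem.
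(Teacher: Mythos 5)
Your proof is correct and follows essentially the same route as the paper: the forward inclusion via the at-most-two-nonzero-weights statement (Proposition \ref{prop1}/\ref{Prop:000001}) and the reverse inclusion via continuity of the weight maps. The paper's own proof is terser, disposing of the reverse inclusion with the phrase ``use the continuity of weight maps and the definition of $\cV$''; your intermediate-value-theorem argument on the interval $[y_i,x_j]$ simply makes explicit what that sentence leaves implicit.
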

\begin{proof}
Take $x\in\bS^1$. By Proposition \ref{Prop:000001}, the orbit $\mathcal{O}_\cF(x) $ cannot intercept more than two minimal sets, so that for all $\omega \in \Omega $ the cluster of $(f_\omega^n(x))$ cannot intercept more than two minimal sets. Recall the sequence in \eqref{eq:seqaverage} converges to a stationary probability measure $m_x=\sum_{i=1}^d u_i(x)\mu_i$. Hence, $\#\{i\in\{1,\ldots,d\}\colon u_i(x)>0\}\leq 2$. In particular, $m_x\in\left\{t\mu_i+(1-t)\mu_j\colon t\in[0,1],\,(i,j)\in\cV\right\}$. Because of the arbitrariness of $x$,
$\cS\subset\left\{t\mu_i+(1-t)\mu_j\colon t\in[0,1],\,(i,j)\in\cV\right\}$, use  and \eqref{eq:sum-ui}. To conclude the equality, use the continuity of weight maps and the definition of $\cV$.
\end{proof}

Now, for $i\in\{1,\ldots,d\}$ consider 
    \begin{equation}\label{eq:def-Vi}
            V_i= \left\{j\in\{1,\ldots,d\}\colon j\neq i,\,(i,j)\in\cV\right\}.
    \end{equation}
    If $d=1$, it is clear that $V_1=\emptyset$. Let us establish the following property of the sets $V_i$, $i=1,\ldots,d$, which will be key to completing the second inequality in Theorem \ref{teo:01}.

    \begin{lemma}\label{lemma:cV}
    Let $(\cF,\nu)$ be an RDS of circle homeomorphisms without finite orbits. 
    Let $d$ be the number of ergodic stationary measures. Let $\cK=\{K_1,\ldots, K_{ d }\}$ be the collection of the closed $\cF$-invariant minimal sets.
    Consider $u_i$ as in \eqref{eq:def-u_i} for $i=1,\ldots,d$. If $d\geq 2$, then for all $i=1,\ldots,d$
    \begin{equation}\label{eq1:lemmacv}
        1\leq\#V_i\leq 2,
    \end{equation}
    where $V_i$ is as in \eqref{eq:def-Vi}. Moreover, 
    \begin{equation}\label{eq2:lemmacv}
        \#\{i\in \{1,\ldots,d\}\colon \#V_i=1\}\leq 2.
    \end{equation}
\end{lemma}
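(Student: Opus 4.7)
The strategy is to study the cyclic arrangement on $\bS^1$ of the finite family of pairwise disjoint closed intervals
\[
\mathcal{I}\;:=\;\bigcup_{i=1}^{d}\cA_i
\]
provided by Theorem~\ref{teo:000000004}, and to exploit its invariance under the induced action of $\cF$ to constrain the neighbor structure encoded by the $V_i$.

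First I would observe that consecutive intervals in the cyclic order of $\mathcal{I}$ always carry different labels: if two consecutive intervals both lay in $\cA_i$, the open gap between them would contain no element of $\mathcal{I}$, hence no $K_r$ (since every $K_r$ is covered by the intervals of $\cA_r\subset\mathcal{I}$), contradicting item~(c) of Theorem~\ref{teo:000000004}. Consequently each $I\in\cA_i$ has two well-defined neighbor labels $j_L(I),j_R(I)\in\{1,\ldots,d\}\setminus\{i\}$ in $\mathcal{I}$. On the open gap to the right of $I$, Proposition~\ref{prop1} gives $u_i+u_{j_R(I)}\equiv 1$ (no other minimal set lies in the gap), and Theorem~\ref{teo:000000004} says $u_i$ drops monotonically from $1$ to $0$ across it, so the interior of the gap contains points $x$ with $u_i(x),u_{j_R(I)}(x)\in(0,1)$; the analogous statement holds on the left. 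Hence
\[
V_i\;=\;\bigcup_{I\in\cA_i}\{j_L(I),\,j_R(I)\},
\]
and in particular $V_i\neq\emptyset$, proving $\#V_i\geq 1$.

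For the upper bound $\#V_i\leq 2$ I would introduce, for each $f\in\cF$, the induced map $\Phi_f:\mathcal{I}\to\mathcal{I}$ sending $I$ to the unique element of $\mathcal{I}$ containing $f(I)$. Well-definedness follows from the $\cF$-invariance of $\{u_k=1\}$ (so $f(I)$ lies in $\bigcup_{J\in\cA_k}J$ whenever $I\in\cA_k$) together with the connectedness of $f(I)$. A counting argument exactly as in Lemma~\ref{lemma:00002} shows $\Phi_f$ is a bijection of each $\cA_k$, and it preserves labels by construction. Because $f$ is a homeomorphism of $\bS^1$, $\Phi_f$ preserves (resp.\ reverses) the cyclic order of $\mathcal{I}$ according to whether $f$ preserves (resp.\ reverses) orientation; in either case consecutive intervals of $\mathcal{I}$ are sent to consecutive intervals, so
\[
\{j_L(\Phi_f(I)),\,j_R(\Phi_f(I))\}\;=\;\{j_L(I),\,j_R(I)\}\qquad\text{for every }I\in\mathcal{I}.
\]
By Theorem~\ref{teo:000000004}(b), for any $I,J\in\cA_i$ one may pick $f\in\cF$ with $\Phi_f(I)=J$; thus the unordered pair $\{j_L(I),j_R(I)\}$ is an invariant of the class $\cA_i$. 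Combined with the formula for $V_i$ above, this gives $\#V_i\leq 2$.

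To finish, consider the undirected graph $\Gamma$ on vertex set $\{1,\ldots,d\}$ whose edges are the pairs in $\cV$. The cyclic sequence of labels encountered along $\mathcal{I}$ is a closed walk in $\Gamma$ that visits every vertex (each $\cA_i$ is nonempty), so $\Gamma$ is connected; and by the previous paragraph every vertex of $\Gamma$ has degree in $\{1,2\}$. A connected graph with all degrees in $\{1,2\}$ is either a simple cycle (no degree-$1$ vertices) or a simple path (exactly two such vertices), yielding $\#\{i:\#V_i=1\}\leq 2$. The main difficulty lies in the rigidity invoked in the upper-bound step: it is only the conjunction of the order-respecting bijection property of $\Phi_f$ with the transitivity provided by Theorem~\ref{teo:000000004}(b) on each $\cA_i$ that forces the unordered neighbor pair to be an invariant of $\cA_i$---the individual ingredients are routine but the combination is what produces the sharp bound $\#V_i\leq 2$.
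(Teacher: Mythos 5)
Your proof is correct. For the bound \eqref{eq1:lemmacv} your route is essentially the paper's: both arguments rest on the adjacency structure of the intervals in $\bigcup_{k}\cA_k$, the transitivity supplied by item (b) of Theorem \ref{teo:000000004}, and the fact that homeomorphisms preserve adjacency. The paper fixes $j\in V_i$, picks one adjacent pair $I'\in\cA_i$, $J'\in\cA_j$, and transports it into an arbitrary $I\in\cA_i$ using item (b) and Lemma \ref{lemma:00003}; you package the same mechanism into the induced bijection $\Phi_f$ and conclude that the unordered neighbor-label pair is constant on $\cA_i$ --- a reorganization rather than a new idea, though your explicit identity $V_i=\bigcup_{I\in\cA_i}\{j_L(I),j_R(I)\}$ is a clean way to get both inequalities at once. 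Where you genuinely diverge is \eqref{eq2:lemmacv}: the paper introduces $\cF$-paths and proves the dedicated Lemma \ref{lemma:not3paths} by a somewhat involved contradiction argument, whereas you observe that the cyclic label sequence along $\bigcup_k\cA_k$ is a closed walk visiting every vertex of the neighbor graph, so that graph is connected with all degrees in $\{1,2\}$ and is therefore a simple path or a simple cycle. This is shorter and more transparent, and it yields slightly more than the statement (the number of degree-one vertices is $0$ or exactly $2$, never $1$). If you write this up, the two points worth making explicit are: (i) the two minimal sets flanking a gap between consecutive intervals of $\bigcup_k\cA_k$ are exactly those indexed by the labels of the flanking intervals (each $I\in\cA_i$ meets $K_i$ and lies in $\{u_i=1\}$, so no other $K_r$ intervenes), which is what justifies applying Proposition \ref{prop1} to get $u_i+u_{j_R(I)}\equiv 1$ on the gap; and (ii) pairwise disjoint closed intervals that contain pairwise disjoint intervals arranged in a given cyclic order inherit that cyclic order, which is what makes $\Phi_f$ order-preserving or order-reversing and hence adjacency-preserving.
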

\begin{proof}[Proof of \eqref{eq1:lemmacv}]
     Assume $d\geq 2$. Fix $i=1,\ldots,d$. Using the fact that the collections $\cA_k$ defined in Theorem \ref{teo:000000004}, $k=1,\ldots d$ are finite and all their elements are closed sets, we obtain that there exists $j\neq i$ such that 
    \[
    \mbox{dist}(\cup_{I\in\cA_i}I,\cup_{J\in\cA_j}J)=\min_{k\neq i}\mbox{dist}(\cup_{I\in\cA_i}I,\cup_{J\in\cA_k}J).
    \]
    From the continuity of the weight maps, we can conclude that $j\in V_i$. Hence, $\# V_i\geq 1$.
    
    Now, let us show that $\# V_i\leq 2$. Take $j\in V_i$ and consider $I'=[a_i',b_i']\in\cA_i$ and $J'=[a_j',b_j']\in\cA_j$ such that 
    \begin{itemize}
        \item $(b_i',a_j')\cap A=\emptyset$ for all $A\in\cup_{k}\cA_k$, or
        \item $(b_j',a_i')\cap A=\emptyset$ for all $A\in\cup_{k}\cA_k$.
    \end{itemize}
    In any case,  by item (c) of Theorem \ref{teo:000000004}, we have that for every $I=[a_i,b_i]\in\cA_i$ there exists $f\in \cF$ and $J=[a_j,b_j]\in\cA_j$ such that $f(I')\subset I$ and $f(J')\subset J$. From
    Lemma \ref{lemma:00003}, we get
    \begin{itemize}
        \item $(b_i,a_j)\cap A=\emptyset$ for all $A\in\cup_{k}\cA_k$, or
        \item $(b_j,a_i)\cap A=\emptyset$ for all $A\in\cup_{k}\cA_k$.
    \end{itemize}
    We have just shown that for all $j\in V_i$ and $I=[a_i,b_i]\in\cA_i$ there exists $J=[a_j,b_j]\in V_j$ such that the two items above are satisfied. Using the topological structure of $\bS^1$, we conclude that $\# V_i\leq 2$ since each interval $I\in\cA_i$ has only two intervals in $\cup_{k\neq i}\cA_k$ as neighbors.
    \end{proof}

To prove \eqref{eq2:lemmacv}, let us first introduce the notion of $\cF$-paths over the index set $\{1,\ldots,d\}$. According to the conclusions in Theorem \eqref{teo:000000004}, we have a finite family of intervals on the circle, say $I_1,\ldots,I_k$, such that for each $\ell=1,\ldots,k$, there exists $r(\ell)\in\{1,\ldots,d\}$ for which $u_{r(\ell)}=1$ on $I_\ell$. Assume that the enumeration of these intervals is counterclockwise. Then, two different indices 
$i$ and $j$ are neighbors if, for some $\ell$, we have $r(\ell)=i$ and also, $r(\ell-1)=j$ or $r(l+1)=1$. With these concepts in mind, we define an $\cF$-path as any finite sequence on 
$\{1,\ldots,d\}$ of the form $[r(\ell),\ldots,r(\ell+n)]$, where $n\geq0$, $\ell\in\{1,\ldots,k\}$, and $r(k+m)=r(m)$, for $m\in\bN$. Let us formalize the definition of $\cF$-paths.

For $n\in\bN$ and $i_1,\ldots,i_n\in\{1,\ldots,d\}$, we say that $[i_1,\ldots,i_n]$ is a \emph{$\cF$-path} if for all $k=1,\ldots,n-1$
\begin{itemize}
    \item $i_k\neq i_{k+1}$, and
    \item there exist $I_k=[a_k,b_k]\in\cA_{i_k}$ and $I_{k+1}=[a_{k+1},b_{k+1}]\in\cA_{i_{k+1}}$ such that the interval $(b_k,a_{k+1})$ does not intercect any $I\in \cup_i \cA_i$.
\end{itemize}
Let us make some observations about $\cF$-paths.
\begin{remark}\label{rem:Fpaths}
Each $\cF$-path with length greater than $\#(\cup_i\cA_i)$, passes through all the indices in $\{1,\ldots,d\}$.
Also, note that $i$ and $j$ are neighbors if and only if $[i,j]$ or $[j,i]$ is a $\cF$-path. Further, if $d=1$ the unique $\cF$-path is $[1]$ and if $d=2$ the $\cF$-paths are as follows $[i_1,i_2,i_1,\ldots,i_1,i_2,i_1]$ or $[i_1,i_2,i_1,\ldots,i_2,i_1,i_2]$. 
\end{remark}

\begin{lemma}\label{lemma:not3paths}
    Let $(\cF,\nu)$ be an RDS of circle homeomorphisms without finite orbits. Let $d$ be the number of ergodic stationary measures. Then, for every $i,j,k,a,b,c\in\{1,\ldots,d\}$, with $i\neq j$, $j\neq k$ and $k\neq i$, at least one of the triples $[a,i,a]$, $[b,j,b]$ or $[c,k,c]$ is not a $\cF$-path.
\end{lemma}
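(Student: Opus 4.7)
The plan is to recast the condition ``$[a,i,a]$ is an $\cF$-path'' geometrically and then apply a dihedral-symmetry argument to the cyclic color sequence of $\bigcup_j\cA_j$. Enumerating the intervals of $\bigcup_j\cA_j$ counterclockwise as $T_1,\ldots,T_N$ with colors $c_1,\ldots,c_N\in\{1,\ldots,d\}$ (so $c_m=\ell$ iff $T_m\in\cA_\ell$), the path definition says that $[a,i,a]$ is an $\cF$-path precisely when there is some $T_m\in\cA_i$ with both cyclic neighbors $T_{m-1},T_{m+1}$ in $\cA_a$; call such an $i$ \emph{$a$-sandwiched}. Using item (b) of Theorem~\ref{teo:000000004} to write $f(T_m)\subset T_{m'}$ for any prescribed $T_{m'}\in\cA_i$ and some $f\in\cF$, and using that $f$ acts as a color-preserving bijection of $\bigcup_j\cA_j$ which either preserves or reverses the cyclic order, one checks (exactly as in the proof of Lemma~\ref{lemma:00002}) that the two neighbors of $T_m$ are sent into the two neighbors of $T_{m'}$. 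Hence every interval of $\cA_i$ has both of its cyclic neighbors in $\cA_a$.

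An important structural consequence of the same orientation dichotomy is that for every color $\ell$ the set $S_\ell:=\{(c_{m-1},c_{m+1}) : c_m=\ell\}$ of (predecessor, successor) color pairs is contained in a two-element set of the form $\{(p_\ell,q_\ell),(q_\ell,p_\ell)\}$: if $f\in\cF$ sends $T_{m_1}$ into $T_{m_2}$, both of color $\ell$, the pair of neighbor colors at $T_{m_2}$ equals the pair at $T_{m_1}$ if $f$ preserves orientation and is its swap if $f$ reverses orientation. Now suppose $i$ is $a$-sandwiched and $c_p=i$. Then $c_{p-1}=c_{p+1}=a$; imposing the two-element structure of $S_a$ at the positions $p-1$ and $p+1$ forces $c_{p-2}=c_{p+2}$, and iterating outward yields $c_{p-t}=c_{p+t}$ for all $t$. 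Thus $(c_n)$ is invariant under the reflection $\rho_p$ of $\bZ/N$ through $p$.

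Assume for contradiction that three distinct indices $i,j,k$ are each sandwiched, at positions $p,q,r$. Then $(c_n)$ is invariant under $\rho_p,\rho_q,\rho_r$, hence under the compositions $\rho_p\rho_q$ and $\rho_q\rho_r$, which act on $\bZ/N$ as shifts by $2(p-q)$ and $2(q-r)$. Its period $\tau$ therefore divides both $2(p-q)$ and $2(q-r)$. If $\tau$ is odd, $\tau\mid p-q$ and so $c_p=c_q$, contradicting $i\neq j$. If $\tau$ is even, each of $p-q,q-r$ is $\equiv 0$ or $\tau/2\pmod\tau$; avoiding both $c_p=c_q$ and $c_q=c_r$ forces both $\equiv\tau/2$, whence $p-r\equiv 0\pmod\tau$ and $c_p=c_r$, still a contradiction. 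Therefore at most two indices are sandwiched, and for any distinct $i,j,k$ at least one of them fails to be sandwiched, so the corresponding member of $\{[a,i,a],[b,j,b],[c,k,c]\}$ is not an $\cF$-path. The main obstacle is paragraph two: extending the local $(a,i,a)$ sandwich outward to a full reflection symmetry of the whole cyclic sequence requires careful bookkeeping with the two-element swap structure at each color.
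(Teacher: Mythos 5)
Your proof is correct, and it runs on the same engine as the paper's, but the execution is genuinely different and in places tighter. Both arguments rest on the two facts you isolate: (1) by item (b) of Theorem~\ref{teo:000000004} together with Lemma~\ref{lemma:00003}, each $f\in\cF$ induces a colour-preserving, cyclic-order-preserving-or-reversing bijection of the intervals of $\bigcup_j\cA_j$, so the (predecessor, successor) colour pair of a colour $\ell$ is well defined up to swap --- this is exactly the structure the paper extracts when it proves $\#V_i\leq 2$ in Lemma~\ref{lemma:cV}; and (2) a sandwiched interval forces a reflection symmetry of the colour word --- this is what the paper expresses as the palindrome identity $i_m=j_{n-m+1}$ for pairs of $\cF$-paths joining $a$ and $b$. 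Where you diverge is the endgame: the paper argues at the level of $\cF$-paths and the counts $\#V_a=\#V_b=\#V_c=2$, concluding rather tersely that $a=b$ and reading off a contradiction, whereas you pass to the dihedral action on $\bZ/N$, compose the three reflections $\rho_p,\rho_q,\rho_r$ into translations, and run a clean parity analysis on the minimal period $\tau$ (odd $\tau$ forces $c_p=c_q$; even $\tau$ forces $p-q\equiv q-r\equiv\tau/2$ and hence $c_p=c_r$). This buys you an argument that is self-contained and verifiable step by step, at the cost of the outward-induction bookkeeping in your second paragraph; that induction does go through, precisely because at each colour the neighbour pair is determined up to swap by either one of its coordinates, so knowing the inner neighbours agree pins down the outer ones. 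One small point worth making explicit: for $d\leq 2$ the statement is vacuous, and for $d\geq 3$ the number $N$ of intervals is at least $3$, so the reflections are well defined; the paper dispatches this with ``for $d<3$ the conclusion is clear,'' and you should say the same.
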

\begin{proof}
Assume $d\geq 3$. For $d<3$ the conclusion is clear.  We proceed by reductio ad absurdum. Take $i,j,k,a,b,c\in\{1,\ldots,d\}$, with $i\neq j$, $j\neq k$ and $k\neq i$, such that $[a,i,a]$, $[b,j,b]$ and $[c,k,c]$ are $\cF$-paths, that is $\#V_i=\#V_j=\#V_k=1$. Using \eqref{eq1:lemmacv}, we can conclude that $\#V_a=2$. In fact, if $\#V_a=1$ then $[i,a,i,a,i]$ is an $\cF$-path and, in particular, for any $n\in\bN$ any $n$-tuple of the form $[i,a,i,a,\ldots,i]$ is an $\cF$-path, which is not possible by Remark \ref{rem:Fpaths} and the fact $d\geq 3$. Analogously, we have $\#V_a=\#V_b=\#V_c=2$. Using the circle structure, we get that for $n\in\bN$ and any two $\cF$-paths $[i_1,\ldots,i_n]$ and $[j_1,\ldots,j_n]$ with $i_1=j_n=a$ and $i_n=j_1=b$, we have $i_m=j_{n-m+1}$ for $m=1,\ldots,n$. Further, we have $i_m=k$ for some $m=1,\ldots,n$. Since we must conclude the same for $\cF$-paths starting at $a$ (or $b$) and ending at $c$. We can conclude that $a=b$. But this contradicts that $i\neq j$, since $\#V_a=2$. Therefore, at least one of the triples $[a,i,a]$, $[b,j,b]$ or $[c,k,c]$ is not a $\cF$-path.
\end{proof}

\begin{proof}[Proof of \eqref{eq2:lemmacv} in Lemma \ref{lemma:cV}]
    Since $\#V_i=1$ implies that $[j,i,j]$ is an $\cF$-path, for some $j\neq i$. We can apply Lemma \ref{lemma:not3paths} to get \eqref{eq2:lemmacv}.
\end{proof}

We already have proven Theorem \ref{teo:01}, as shown below.
\begin{proof}[Proof of Theorem \ref{teo:01}]
    Proposition \ref{prop1} and Lemma \ref{lemma:cV} imply the first part of the theorem. Item (i) follows from Lemma \ref{lemma:ui=ouui=0} and Theorem \ref{teo:000000004}. The second part of Theorem \ref{teo:000000004} implies Item (ii). Item (iii) was already established in Corollary \ref{cor:constants}.
\end{proof}

\section{Proximal action}\label{sec:prosimal}

An important concept in the study of RDSs is \emph{proximality}. Introduced by Furstenberg in his seminal paper \cite{Fur:67}, it has since been employed by many researchers in the field. Proximality measures the degree of similarity between two orbits in a dynamical system. Specifically, two points in a dynamical system are considered proximal if their distance remains small over time. See item (e) of Theorem \ref{teo3} for further details.

Let us state the main result of this section,  which further illustrates the usefulness of the collections of intervals defined in Theorem \ref{teo:01}.
\begin{theorem}\label{teo3}
   Let $(\cF,\nu)$ be an RDS of circle homeomorphisms without finite orbits. Let $d$ be the number of ergodic stationary measures.  Also, assume $d>1$. For $i\in\{1,\ldots,d\}$, consider the finite collection of disjoint closed intervals $\cA_i$ as in Theorem \ref{teo:000000004}. Then, there exists a finite collection of disjoint closed intervals $\cB_i$ such that
    \begin{itemize}
        \item[(a)] $\# \cB_i=\# \cA_i$ or $\#\cB_i=2\#\cA_i$;
        \item[(b)] $K_i\subset \cup_{I\in\cB_i} I\subset \cup_{I\in\cA_i} I$;
        \item[(c)] each interval in $\cA_i$ contains either one or two intervals in $\cB_i$;
        \item[(d)] for all $I,J\in\cB_i$ there exists $f\in\cF$ such that $f(I)\subset J$;
        \item[(e)] the semigroup $G_\cF$ generated by $\cF$ acts proximally on each $I\in\cB_i$, i.e., for every $x,y\in I$ there exists a sequence $(g_n)_{n\in\bN}$ in $G_\cF$ such that
        \[
        \lim_{n\to\infty}\mbox{dist}(g_n(x),g_n(y))=0.
        \]
    \end{itemize}
\end{theorem}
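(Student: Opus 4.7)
The plan is to refine each interval $I\in\cA_i$ into one or two closed subintervals on which every pair of points admits a sequence in $G_{\cF}$ shrinking their distance to zero. Since $d\geq 2$ we are in case (i) of the introduction, so the RDS has the local contraction property \cite[Proposition 4.8]{Mal:17}: for $\nu^{\bN}$-almost every $\omega$ and every $x\in\bS^1$ there is a neighborhood $U_x$ of $x$ and indices $n_k\to\infty$ with $\mbox{diam}(f_\omega^{n_k}(U_x))\to 0$. This yields a rich supply of strongly contracting words in $G_\cF$, which will be our main tool.

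For each $I=[a,b]\in\cA_i$, the approach is to locate a (possibly degenerate) ``gap'' $(c,c')\subset I$ such that $[a,c]$ and $[c',b]$ are maximal closed subintervals of $I$ on which every pair of points is proximal under $G_\cF$. When $c=c'$ the whole interval $I$ is proximal and we place $I$ into $\cB_i$; when $c<c'$ both halves are placed into $\cB_i$. The central claim, and main obstacle, is that such a decomposition exists with $(c,c')\cap K_i=\emptyset$, so that $[a,c]\cup[c',b]$ still covers $K_i\cap I$. This rests on combining the minimality of $G_\cF$ on $K_i$ with local contractions: starting from any two interior points $x,y\in K_i\cap I$, one first selects a word whose image carries the two orbit points close to a common $z\in K_i$ (using minimality), and then appends a word contracting a neighborhood of $z$ (using local contraction) to obtain a single sequence $(g_n)\subset G_\cF$ driving the two orbits together. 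The only obstruction to proximality thus lies at the endpoints $a$ or $b$, bounding the number of proximal blocks per $I$ by two.

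Granted the claim, set $\cB_i$ to be the union over $I\in\cA_i$ of these one or two proximal blocks. Items (b) and (c) follow immediately from the construction. For (d), recall (as in Lemma \ref{lemma:00002}) that every $f\in\cF$ induces a permutation of $\cA_i$, and being a circle homeomorphism it preserves proximality, hence sends each $\cB_i$-block inside $I$ into a $\cB_i$-block inside $f(I)$; combined with Theorem \ref{teo:000000004}(b) this yields the required transitivity. In particular, the number of blocks per $\cA_i$-interval is $\cF$-invariant, forcing $\#\cB_i=\#\cA_i$ or $\#\cB_i=2\#\cA_i$, giving (a). Finally, (e) holds by construction for $K_i$-points and extends to arbitrary $x,y\in B\in\cB_i$ by first clustering the orbits near a common $z\in K_i$ and then applying a local contraction at $z$.
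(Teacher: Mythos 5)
Your overall decomposition (each $I=[a,b]\in\cA_i$ splits into at most two closed blocks separated by a gap disjoint from $K_i$) matches the shape of the paper's construction, but the step on which everything rests --- proximality on the blocks --- is not proved, and the mechanism you propose for it is circular. You write that, given two interior points $x,y\in K_i\cap I$, one ``first selects a word whose image carries the two orbit points close to a common $z\in K_i$ (using minimality)''. Minimality of $K_i$ only says that each \emph{single} orbit is dense in $K_i$; it does not provide one $g\in G_\cF$ moving $x$ and $y$ \emph{simultaneously} close to a common point --- that joint statement is precisely proximality of the pair, i.e.\ what is to be proved (a minimal isometric action is minimal but nowhere proximal). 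Local contraction does not close this gap either: it contracts some neighborhood $U_x$ of uncontrolled size, and nothing puts $y$ inside $U_x$. Worse, the claim that all interior points of $K_i\cap I$ are mutually proximal, with ``the only obstruction \ldots at the endpoints $a$ or $b$'', is false in general and inconsistent with your own two-block picture: in the genuine two-block case the gap lies in the \emph{interior} of $I$, each block contains one endpoint of $I$ and meets $K_i$, and points of $K_i$ lying in different blocks are \emph{never} brought together, since by Lemma~\ref{lemma:forTeo2}(iii)--(iv) every element of $G_\cF$ returning $I$ into an interval of $\cA_i$ either preserves or swaps the two blocks, keeping their mutual distance bounded below. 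This case genuinely occurs when $\cF$ contains orientation-reversing maps (cf.\ Remark~\ref{rem1:coro:teo2} and Example~\ref{ex1}).

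The paper sidesteps all of this with an elementary order-theoretic device: for $I=[a,b]$ and $S_I^+$ the orientation-preserving maps of $G_\cF$ sending $I$ into itself, it sets $\hat a=\sup\{f(a)\colon f\in S_I^+\}$ and $\hat b=\inf\{f(b)\colon f\in S_I^+\}$, notes $g(\hat a)\le\hat a$ and $g(\hat b)\ge\hat b$ for all $g\in S_I^+$ by the semigroup property, and then exhibits explicit contracting sequences proving proximality on $[a,b]$ when $\hat a\ge\hat b$, and on $[a,\hat a]$ and $[\hat b,b]$ when $\hat a<\hat b$; neither local contraction nor stationary measures enter, and the facts $\hat a,\hat b\in K_i$, $(\hat a,\hat b)\cap K_i=\emptyset$ give item (b). Your treatment of (a) and (d) --- transport the block count along the maps $f(I)\subset J$ supplied by Theorem~\ref{teo:000000004}(b) and derive a contradiction from a mismatch --- is essentially the paper's argument and would go through once the blocks are correctly defined and their proximality is established by a non-circular argument such as the one above (note also that a homeomorphism only \emph{reflects} proximality --- $f(x),f(y)$ proximal implies $x,y$ proximal --- which is the direction your block-matching actually needs).
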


Theorem \ref{teo3} is a "probabilistic" formulation of Theorem \ref{newteo:1}. In fact, if we consider for each minimal $K_i$ the collection $\cE_i=\cE_{K_i}$ (defined in Section \ref{sec:topprop}) instead of $\cA_i$, we obtain the same result on the same consequences. 
\begin{proof}[Proof of Theorem \ref{newteo:1}]
    Proceed analogously as in the proof of Theorem \ref{teo3}, considering collections $\cE_i=\cE_{K_i}$ instead of collections $\cA_i$. Then we have a new collection $\hat \cE_i$ with the properties of $\cB_i$, and such that 
    \[
    K_i\subset \cup_{I\in\hat\cE_i} I\subset \cup_{I\in\cB_i} I.
    \]
    To conclude, consider $L_i=\cup_{I\in\hat\cE_i} I$.
\end{proof}

\begin{remark}
    Consider an RDS $(\cF,\nu)$ as in Theorem \ref{teo3}.
    Since $d>1$ and there are no finite orbits, we must have that local contraction property. Note that local contraction property is preserved for any subsystem. 
     Applying \cite[Proposition 4.18]{Mal:17} and using the finiteness of collection $\cB_i$, we can conclude that the action of $G_\cF$ is synchronizing on each interval $I\in\cB_i$, i.e., for every $x,y\in I$ and $\nu^\bN$-almost every sequence $\omega\in\cF^\bN$ we have
        \[
        \lim_{n\to\infty}\mbox{dist}(f_\omega^n(x),f_\omega^n(y))=0.
        \]
\end{remark}

\begin{corollary}\label{corollary:teo2}
   Assume that $(\cF,\nu)$ is an RDS without finite orbits, with $\cF\subset\mbox{Hom}^+(\bS^1)$. Also, assume that the number $d$ of ergodic stationary measures is greater than 1. For $i\in\{1,\ldots,d\}$, consider the finite collection of disjoint closed intervals $\cA_i$ as in Theorem \ref{teo:000000004}.  Then, the semigroup $G_\cF$ generated by $\cF$ acts proximally on each $I\in\cA_i$.
\end{corollary}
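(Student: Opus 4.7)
The plan is to apply Theorem \ref{teo3} to the RDS $(\cF,\nu)$ and then exploit the orientation-preserving hypothesis to show that the collections $\cB_i$ produced there coincide with the collections $\cA_i$ from Theorem \ref{teo:000000004}. Once this identification is made, the corollary is an immediate consequence of Theorem \ref{teo3}(e). After applying Theorem \ref{teo3}, I would have collections $\cB_i$ satisfying properties (a)--(e), and by (c) each $I\in\cA_i$ contains one or two elements of $\cB_i$; the task is to rule out both the case in which some $I_0\in\cA_{i_0}$ contains a single $J\in\cB_{i_0}$ with $J\subsetneq I_0$ and the case in which some $I_0$ contains two distinct $J_1,J_2\in\cB_{i_0}$.

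My argument proceeds by contradiction. Assume there exists such an $I_0$, so that the ``fringe'' set $F=\left(\bigcup_i\bigcup_{I\in\cA_i}I\right)\setminus\left(\bigcup_i\bigcup_{J\in\cB_i}J\right)$ is nonempty. I would first show that $F$ is $\cF$-invariant: the invariance of $\bigcup_i\bigcup\cA_i$ comes from Lemma \ref{lemma:ui=ouui=0}, and the invariance of $\bigcup_i\bigcup\cB_i$ will rely on orientation preservation. The key point is that an orientation-preserving $f\in\cF$ restricted to a subarc $I\in\cA_i$ is order-preserving (since $f(I)$ lies in some $I'\in\cA_i$), so it maps the cyclically ordered list of $\cB_i$-intervals inside $I$ to the cyclically ordered list of $\cB_i$-intervals inside $I'$ respecting their labels; this forces $f(\bigcup\cB_i)\subset\bigcup\cB_i$. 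Granted this, the closure $\overline{F}$ is a nonempty closed $\cF$-invariant set inside $\bigcup_i\{u_i=1\}$. By Zorn's lemma it contains a minimal $\cF$-invariant subset, which by the classification of minimal sets in $\cK=\{K_1,\ldots,K_d\}$ is some $K_{i^*}$. Since the sets $\{u_j=1\}$ are pairwise disjoint for distinct $j$, we get $K_{i^*}\subset\overline{F}\cap\bigcup\cA_{i^*}$. On the other hand, Theorem \ref{teo3}(b) gives $K_{i^*}\subset\bigcup\cB_{i^*}$, hence $K_{i^*}\subset\overline{F}\cap\bigcup\cB_{i^*}$; since $F$ is disjoint from $\bigcup\cB_{i^*}$ by definition, this last intersection is contained in $\partial\bigcup\cB_{i^*}$, a finite set of interval endpoints. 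Thus $K_{i^*}$ would be finite, contradicting the no-finite-orbits hypothesis.

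The main obstacle I anticipate is the rigorous justification that $\bigcup\cB_i$ is $\cF$-invariant in the orientation-preserving case: I must show that an \emph{arbitrary} orientation-preserving $f\in\cF$ maps every $\cB_i$-interval into some $\cB_i$-interval, not just the specific maps produced by Theorem \ref{teo3}(d). I expect this to follow from a careful examination of the construction of $\cB_i$ in the proof of Theorem \ref{teo3}, together with the observation that the cyclic-position label of a $\cB_i$-interval inside its containing $\cA_i$-interval is a topological invariant preserved by any orientation-preserving homeomorphism. In the orientation-reversing case this labelling is not preserved, which is precisely what allows the ``doubling'' phenomenon $\#\cB_i=2\#\cA_i$ to occur and is why the corollary requires $\cF\subset\mbox{Hom}^+(\bS^1)$.
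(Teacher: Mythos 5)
Your overall strategy (apply Theorem \ref{teo3} and show that in the orientation-preserving case $\cB_i=\cA_i$) is the right one, but the route you take to the identification $\cB_i=\cA_i$ has two genuine gaps. First, the $\cF$-invariance of $\bigcup_i\bigcup_{J\in\cB_i}J$, which you yourself flag as the main obstacle, is not established: the assertion that an orientation-preserving $f$ with $f(I)\subset I'$ must send the $\cB_i$-intervals of $I$ into the $\cB_i$-intervals of $I'$ ``respecting their labels'' is exactly the point at issue, since nothing a priori prevents $f$ from sending an endpoint $\hat a_j$ of a $\cB_i$-interval into the gap of $I'$. Any honest proof of this invariance has to go back into the construction of $\cB_i$ in Lemma \ref{lemma:forTeo2}, and once there one finds the much stronger and simpler fact the paper actually uses: the splitting $\cB_{i,j}=\{A_j,B_j\}$ occurs only in case (ii) of that lemma, and Remark \ref{rem1:coro:teo2} states that case (ii) requires an orientation-\emph{reversing} element of $S_{I_j}$. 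Since $\cF\subset\mbox{Hom}^+(\bS^1)$ forces $G_\cF\subset\mbox{Hom}^+(\bS^1)$, case (ii) never occurs, so $\cB_{i,j}=\{I_j\}$ for every $j$ and $\cB_i=\cA_i$ directly --- no fringe argument is needed.

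Second, even if you grant forward invariance of both $\bigcup\cA_i$ and $\bigcup\cB_i$, your fringe set $F=\bigl(\bigcup_i\bigcup_{I\in\cA_i}I\bigr)\setminus\bigl(\bigcup_i\bigcup_{J\in\cB_i}J\bigr)$ need not be forward invariant: $f(F)\cap\bigcup\cB=\emptyset$ is equivalent to $f^{-1}(\bigcup\cB)\cap\bigcup\cA\subset\bigcup\cB$, a \emph{backward}-invariance statement that does not follow from $f(\bigcup\cB)\subset\bigcup\cB$. This is not a pedantic point: in the situation where the splitting genuinely occurs (case (ii) with an orientation-reversing $f\in S_I$), one checks from items (iii)--(iv) of Lemma \ref{lemma:forTeo2} that $f$ maps the gap $(\hat a,\hat b)$ \emph{onto} an interval containing it, so the fringe there is backward- rather than forward-invariant. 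Hence the step ``$\overline F$ is a nonempty closed $\cF$-invariant set, so it contains some $K_{i^*}$'' is unsupported, and the contradiction you derive from it does not go through as written. The conclusion of the corollary is correct, but you should replace the fringe argument by the direct appeal to Remark \ref{rem1:coro:teo2} and the construction in the proof of Theorem \ref{teo3}.
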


In the proof of Theorem \ref{teo3}, it becomes apparent that the primary elements utilized are the topological properties of the collections $\cA_i$. Specifically, for a fixed $i\in\{1,\ldots,d\}$, we employ the $\cF$-invariance properties obtained in Theorem \ref{teo:000000004}. Furthermore, we use the fact that almost every random orbit starting from an extremal point of any interval within $\cA_i$ converges to the minimal $K_i$. Consequently, in the case $d=1$, when the closed minimal set $K$ is contained within a finite union of nontrivial closed intervals, we can follow the same approach as in the proof of Theorem \ref{teo3} to establish the subsequent result.

\begin{corollary}\label{corollary2:teo2}
   Let $(\cF,\nu)$ be an RDS of circle homeomorphisms without finite orbits. Let the number $d$ be the number of ergodic stationary measures and assume $d=1$. Let $K\subset\bS^1$ be the closed minimal set.
   If there exists a finite collection $\cA$ of closed intervals such that $\cup_{I\in\cA}I\subsetneq\bS^1$ and for any $I,J\in\cA$, $f\in \cA$ there exist $I',J'\in\cF$ satisfying $f(I)\subset I'$ and $f(J')\subset J$. Then, $K\subset \cup_{I\in\cA}I$ and there exists a finite collection of disjoint closed intervals $\cB$ such that
    \begin{itemize}
        \item[(a)] $\# \cB=\# \cA$ or $\#\cB=2\#\cA$;
        \item[(b)] $K\subset \cup_{I\in\cB} I\subset \cup_{I\in\cA} I$;
        \item[(c)] each interval in $\cA$ contains either one or two intervals in $\cB$;
        \item[(d)] for all $I,J\in\cB$ there exists $f\in\cF$ such that $f(I)\subset J$;
        \item[(e)] the semigroup $G_\cF$ generated by $\cF$ acts proximally on each $I\in\cB$.
    \end{itemize}
\end{corollary}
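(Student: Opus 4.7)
The plan is to mimic the proof of Theorem \ref{teo3}, with the given collection $\cA$ playing the role of the $\cA_i$ and the unique minimal set $K$ playing the role of $K_i$. First, I would verify $K\subset\bigcup_{I\in\cA}I$: the hypothesis says that $\bigcup_{I\in\cA}I$ is a nonempty closed $\cF$-invariant (hence $G_\cF$-invariant) set, every such set contains an $\cF$-invariant minimal subset, and since $d=1$ the unique such subset is $K$. Second, since the local contraction property is what makes the argument go through (as noted in the Remark after Theorem \ref{teo3}), I take as a standing assumption that we are in case (i) of the introductory trichotomy; with $d=1$ and \eqref{eq:sum-ui} we then have $u_1\equiv 1$, so for every $x\in\bS^1$ the cluster set of $(f_\omega^n(x))_{n\geq 0}$ equals $K$ for $\nu^\bN$-almost every $\omega$.

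With these reductions in place, the construction of $\cB$ proceeds interval by interval on $\cA$ as in Theorem \ref{teo3}. For each $I\in\cA$, one of two cases occurs: either $G_\cF$ already acts proximally on $I$, in which case $I$ is retained in $\cB$; or a nondegenerate open gap of $K$ inside $I$ separates $I$ into two closed sub-intervals $I^-$ and $I^+$, each meeting $K$ and on each of which $G_\cF$ acts proximally, and both are added to $\cB$. The hypothesis on $\cA$ induces a permutation action of $\cF$ on $\cA$ (analogous to Lemma \ref{lemma:00002}), and a Lemma \ref{lemma:00003}-type argument then forces $f(I^\pm)\subset (I')^\pm$ whenever $f(I)\subset I'$. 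Items (a)--(c) are immediate from the construction, (d) follows by combining the permutation action with the compatibility of splits just mentioned, and (e) is built in.

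The main obstacle, already at the heart of the proof of Theorem \ref{teo3}, is the splitting step and the verification of its $\cF$-compatibility. I would expect the split of $I$ to occur at the boundary of a maximal open subarc of $I$ disjoint from $K$: the random orbits issued from the endpoints of $I$ almost surely accumulate on all of $K$, and local contraction then compresses small arcs near each endpoint to a single point of $K$, so when these two contraction basins fail to cover $I$ the remaining open arc is precisely the gap where we split. Since the permutation action of $\cF$ on $\cA$ sends endpoints to endpoints, the splitting propagates coherently under $\cF$, giving the required $\cF$-invariance of $\cB$.
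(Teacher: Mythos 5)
Your overall strategy --- transporting the proof of Theorem \ref{teo3} to the single collection $\cA$, with $K$ playing the role of $K_i$ --- is exactly what the paper intends (it gives no separate proof, only the indication preceding the statement), and your derivation of $K\subset\cup_{I\in\cA}I$ via minimality is fine. The genuine gap is your claim that items (a)--(c) are ``immediate from the construction.'' Item (c) is immediate, but it only yields $\#\cA\leq\#\cB\leq 2\#\cA$; item (a) is the strictly stronger all-or-nothing statement that either \emph{every} interval of $\cA$ splits or \emph{none} does, and nothing in the interval-by-interval construction rules out a mixed situation. In the paper this is the main content of the proof of (a) in Theorem \ref{teo3}: if some $I_j$ splits, one fixes $x\in[a_j,\hat a_j]$ and $y\in[\hat b_j,b_j]$, so that $\mbox{dist}(g(x),g(y))\geq\delta>0$ for every $g$ preserving $I_j$ by items (iii)--(iv) of Lemma \ref{lemma:forTeo2}; if some $I_k$ does not split, one maps $I_j$ into $I_k$, contracts there by proximality, and maps back into $I_j$, producing a $g$ preserving $I_j$ with $\mbox{dist}(g(x),g(y))<\delta$, a contradiction. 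This argument does carry over verbatim to your setting (with the mapping hypothesis on $\cA$ replacing Theorem \ref{teo:000000004}(b)), but it must be supplied.

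A secondary issue is your reliance on local contraction. It is not justified: the corollary does not assume case (i) of the trichotomy, so you would need to show the hypothesis is incompatible with case (ii). (It is: the complement of $\cup_{I\in\cA}I$ is a nonempty finite union of open arcs that is forward invariant for $\cF^-$, so its closure contains the unique $\cF^-$-minimal set, which in case (ii) is again the infinite set $K$, whereas that closure meets $K$ in only finitely many points.) It is also not needed: in Lemma \ref{lemma:forTeo2} the splitting points are $\hat a=\sup\{f(a)\colon f\in S_I^+\}$ and $\hat b=\inf\{f(b)\colon f\in S_I^+\}$, and proximality on $[a,\hat a]$ and $[\hat b,b]$ (or on all of $I$ when $\hat a\geq\hat b$) is a purely semigroup-theoretic consequence of these definitions; the only dynamical input is that the orbit closure of $a$ contains $K$, which here follows from $d=1$ and minimality, with no probabilistic hypothesis. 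Your ``contraction basin'' description leaves the actual location of the split unspecified, which is precisely what you need pinned down to verify the (iii)--(iv) compatibility and hence item (d).
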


Before showing Theorem \ref{teo3}, let us establish some useful results. In what follows, as $x,y\in [a,b]$, $x\neq y$ we write $x<y$ whenever $\langle a,x,y,b\rangle\in\CO$.

\begin{lemma}\label{lemma:forTeo2}
    Assume that $(\cF,\nu)$ is an RDS with $\cF\subset\mbox{Hom}(\bS^1 )$ without finite orbits. Let the number $d$ be the number of ergodic stationary measures and assume $d\geq 2$. Let $G_\cF$ be the semigroup generated by $\cF$. Fix $i\in\{1,\ldots,d\}$. Let $\cA_i$ be as in Theorem \ref{teo:000000004}. Given $I=[a,b]\in\cA_i$, let $S_I\subset G_{\cF}$ denote the semigroup of the maps $f$ in $G_{\cF}$ such that $f(I)\subset I$. Let $S_I^+\subset S_I$ be the semigroup of preserving-orientation maps in $S_I$. Set
    \[
    \hat a=\sup\{f(a)\colon f\in S_I^+\},
    \]
    and 
    \[
    \hat b=\inf\{f(b)\colon f\in S_I^+\}.
    \]
    Then, 
    \begin{itemize}
        \item[(i)] either $\hat a\geq\hat b$ and $G_\cF$  acts proximally on $[a,b]$ , 
        \item[(ii)] or,  $\hat a<\hat b$ and $G_\cF$ acts proximally on $[a,\hat a]$ and $[\hat b,b]$. In this case, we also have $\hat a,\hat b\in K_i$ and $(\hat a ,\hat b)\cap K_i=\emptyset$
    \end{itemize}
    In case (ii), for each $f\in S_I$ we have
    \begin{itemize}
        \item[(iii)] either $f$ preserves orientation, $f([a,\hat a])\subset [a,\hat a]$, and $f([\hat b, b])\subset [\hat b, b]$,
        \item[(iv)] or, $f$ does not preserve orientation, $f([a,\hat a])\subset [\hat b, b]$, and $f([\hat b, b])\subset [a,\hat a]$.
    \end{itemize}
     
\end{lemma}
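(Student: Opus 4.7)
The plan is to split along the dichotomy $\hat a \geq \hat b$ (case (i)) versus $\hat a < \hat b$ (case (ii)), preceded by three preliminaries. First, Theorem \ref{teo:000000004}(b) applied with $J = I$ yields $f \in \cF$ with $f(I) \subset I$; then either $f \in S_I^+$ or $f \circ f \in S_I^+$, so $S_I^+$ is nonempty. Second, $f(I) \subset I$ for $f \in S_I^+$ gives $a \leq \hat a \leq b$ and $a \leq \hat b \leq b$. Third, for every $g \in S_I^+$ one has $g(\hat a) \leq \hat a$ and $g(\hat b) \geq \hat b$: picking $f_n \in S_I^+$ with $f_n(a) \to \hat a$, the relation $(g \circ f_n)(a) \leq \hat a$ combined with continuity of $g$ yields $g(\hat a) \leq \hat a$, and the argument for $\hat b$ is symmetric. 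As an immediate consequence, in case (ii), for every $f \in S_I^+$ one has $f([a, \hat a]) = [f(a), f(\hat a)] \subset [a, \hat a]$ and $f([\hat b, b]) \subset [\hat b, b]$, giving item (iii).

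In case (ii), proximality of $G_\cF$ on $[a, \hat a]$ follows from item (iii) and the choice of $f_n \in S_I^+$ with $f_n(a) \to \hat a$: the inclusion $f_n([a, \hat a]) \subset [f_n(a), \hat a]$ forces the diameter to zero, so $|f_n(x) - f_n(y)| \to 0$ for all $x, y \in [a, \hat a]$; a symmetric argument with $g_n(b) \to \hat b$ handles $[\hat b, b]$. For item (iv), if $f \in S_I \setminus S_I^+$, the connected image $f([a, \hat a]) = [f(\hat a), f(a)]$ cannot lie inside $[a, \hat a]$: otherwise, orientation-reversal forces $f(b) \leq f(a) \leq \hat a$, so $f(I) \subset [a, \hat a]$ and $f^2(I) \subset [a, \hat a]$, contradicting $f^2(b) \geq \hat b > \hat a$ (since $f^2 \in S_I^+$); an analogous argument rules out $f([a, \hat a])$ meeting the gap $(\hat a, \hat b)$, so $f([a, \hat a]) \subset [\hat b, b]$ with the analogous statement for $f([\hat b, b])$. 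The inclusion $\hat a, \hat b \in K_i$ then follows from the recurrence relation $f_n(\hat a) \to \hat a$ (obtained by squeezing $f_n(a) \leq f_n(\hat a) \leq \hat a$) together with the fact that the unique minimal $\cF$-invariant subset of the closed $\cF$-invariant set $\overline{G_\cF(\hat a)} \subset \{u_i = 1\}$ is $K_i$.

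In case (i), $\hat a \geq \hat b$, the plan is to construct, for every $\epsilon > 0$, a map $h \in S_I^+$ with $h(b) - h(a) < \epsilon$, yielding proximality of $G_\cF$ on $[a, b]$. Choose $g \in S_I^+$ with $g(b) < \hat b + \epsilon/2$; since $g$ is orientation-preserving and $\hat a \geq \hat b$, one has $g(\hat a) \geq g(\hat b) \geq \hat b$ while $g(\hat a) \leq \hat a$. By continuity of $g$ at $\hat a$, choose $\delta > 0$ so that $g((\hat a - \delta, \hat a]) \subset (g(\hat a) - \epsilon/2, \hat a]$, and choose $f \in S_I^+$ with $f(a) > \hat a - \delta$; then $h := g \circ f \in S_I^+$ satisfies $h(a) > g(\hat a) - \epsilon/2 \geq \hat b - \epsilon/2$ and $h(b) \leq g(b) < \hat b + \epsilon/2$, giving $h(b) - h(a) < \epsilon$. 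The main obstacle is the emptiness $(\hat a, \hat b) \cap K_i = \emptyset$ in case (ii): my plan is to suppose $z \in K_i \cap (\hat a, \hat b)$, use $G_\cF$-minimality of $K_i$ to obtain $g_n \in G_\cF$ with $g_n(a) \to z$, and track the $\cA_i$-intervals visited by $I$ under the iterates via Theorem \ref{teo:000000004}(b) to force $g_n \in S_I$ for large $n$; the subcase $g_n \in S_I^+$ is then excluded by $g_n(a) \leq \hat a < z$, and the subcase $g_n \in S_I \setminus S_I^+$ is excluded by item (iv), which would force $g_n(a) \in [\hat b, b]$, contradicting $g_n(a) \to z < \hat b$.
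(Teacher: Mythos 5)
Most of your argument tracks the paper's proof closely: the preliminary facts $g(\hat a)\le\hat a$ and $g(\hat b)\ge\hat b$ for $g\in S_I^+$, the proximality in case (ii) via $f_n(a)\to\hat a$, the exclusion argument for $(\hat a,\hat b)\cap K_i=\emptyset$ (the paper gets $g_n(a)\to z$ from $u_i(a)=1$ rather than from minimality, but the fact is the same), and the $f^2\in S_I^+$ trick in item (iv) are all the paper's steps. Your case (i) is in fact a little cleaner: the paper treats $\hat a>\hat b$ and $\hat a=\hat b$ by two separate constructions, whereas your single map $h=g\circ f$ with $h(b)-h(a)<\epsilon$ handles both at once.

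There are two gaps. The serious one is your argument for $\hat a,\hat b\in K_i$. Recurrence of $\hat a$ (that is, $f_n(\hat a)\to\hat a$) together with the fact that $K_i$ is the unique minimal subset of $\overline{\mathcal{O}_\cF(\hat a)}$ does \emph{not} imply $\hat a\in K_i$: a recurrent point need not lie in any minimal subset of its own orbit closure (think of a transitive point of a non-minimal system). All your two facts yield is $K_i\subset\overline{\mathcal{O}_\cF(\hat a)}$. The paper argues differently: it first establishes $K_i\cap[a,\hat a]\neq\emptyset$, and then, assuming $\hat a\notin K_i$, takes the point $x\in K_i\cap[a,\hat a)$ closest to $\hat a$ and notes that every $f\in S_I^+$ satisfies $f(a)\le f(x)\le x<\hat a$, contradicting $\hat a=\sup f(a)$. (With your own proximality statement you could instead push a point $y\in K_i\cap[a,\hat a]$ onto $\hat a$ along $f_n$ and use closedness of $K_i$; but either way you must first produce such a $y$, which you never do.) The second, smaller gap is in item (iv): your two exclusions control where the endpoint $f(a)$ lands (giving $f(a)\ge\hat b$), but to conclude $f([a,\hat a])\subset[\hat b,b]$ you also need $f(\hat a)\ge\hat b$, and the advertised ``analogous argument'' does not obviously produce this. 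One fix: apply the endpoint result to the orientation-reversing maps $f\circ f_n$ and let $n\to\infty$, so that $f(\hat a)=\lim f(f_n(a))\ge\hat b$. (The paper's own proof of (iv) is terse on exactly this point as well.)
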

    \begin{proof}
    Using that $S_I^+$ is a semigroup, we get that for $g\in S_I^+$, 
    \begin{align*}
        g(\hat a)&= g(\sup\{f(a)\colon f\in S_I^+\})\\
        &=\sup\{g(f(a))\colon f\in S_I^+\}\\
        &\leq \sup\{f(a)\colon f\in S_I^+\}=\hat a,
    \end{align*}
    that is $g(\hat a)\leq \hat a$. Analogously, for $g\in S_I^+$ we get $g(\hat b)\geq \hat b$.
    
    \textbf{Case $\hat a>\hat b$.} In this case, it is easy show the proximal action of $G_\cF$ on $I$. In fact, take $g\in S_I^+$ such that $\hat b<g(a)\leq g(b)\leq b$. Then consider consider a sequence $(g_n)_{n\in\bN}$ in $ S_I^+$ such that $g_n(b) $ converges to $\hat b$ as $n\to\infty$. Hence, for every $x\in I$, $\hat b\leq g_n(\hat b)\leq g_n(g(a))\leq g_n(g(x)))\leq g_n(g(b))\leq g_n(b)$ and so the sequence $(g_n(g(x)))_{n\in\bN}$ converges to $\hat b$. So that the sequence $(g_n\circ g)_{n\in\bN}$ in $S_I^+\subset G_\cF$ degenerates the interval $I$ to $\{\hat b\}$, which implies what is desired.  
    
    \textbf{Case $\hat a=\hat b$.} For all $g\in S_I^+$ we get 
    \[
    g(\hat b)\geq \hat b=\hat a\geq g(\hat a),
    \]
    i.e., $c=\hat a=\hat b\in [a,b]$ is a fixed point by every map in $S_I^+$. Now, let us show that for any $\varepsilon>0$, there exists $g\in S_I^+$ such that $g([a,b])\subset (c-\varepsilon,c+\varepsilon)$. By definition of supremum and infimum, given $\varepsilon>0$ there exist $f\in S_I^+$ and a sequence $(f_n)_{n\in\bN}$ such that 
    \[
    c\leq f(b)<c+\varepsilon,
    \]
    and 
    \[
    \lim_{n\to\infty}f_n(a)=c. 
    \]
    By continuity,
    \[
    \lim_{n\to\infty}f\circ f_n(a)=f(c)=c.
    \]
    By definition of $c$, it is clear that $c\leq f_n(b)\leq b$ for all $n\in\bN$. So that for $N$ large enough we have 
    \[
    f\circ f_N(a)\in (c-\varepsilon,c]\quad\mbox{and}\quad f\circ f_N(b)\in [c,f(b)]\subset [c,c+\varepsilon).
    \]
    Consequently, $g([a,b])\subset (c-\varepsilon,c+\varepsilon)$ for $g=f\circ f_N$. Using the arbitrariness of $\varepsilon$, we can conclude that the semigroup $S_I^+$ (and so $G_{\cF}$) acts proximally on $I=[a,b]$.

     \textbf{Case $\hat a<\hat b$.} By definition of $\hat a$ and $\hat b$, it is clear that $S_I^+$ acts proximally on $[a,\hat a]$ and $[\hat b,b]$. Since $S_I^+\subset S_I\subset G_\cF$, we have that $G_\cF$ acts proximally on $[a,\hat a]$ and $[\hat b,b]$. Now, let us show that $f( a),f( b)\in [a,\hat a]\cup[\hat b,b]$ for all $f\in S_I$. Take $f\in S_I$ and note $f^2\in S_I^+$. If $f$ is a not preserving orientation map then $f(a)>\hat b$, because
     \[
     a\leq f(b) <f(a)<\hat b,
     \]
     implies
     \[
     f(\hat b)<f^2(a) <f^2(b)\leq f(a)<\hat b
     \]
     which is a contradiction to the definition of $\hat b$. Analogously, $f(b)<\hat a$. Therefore, for $f\in S_I$ then 
     \[
     f(a)\in [a,\hat a],\, f(b)\in[\hat b,b]
     \]
     whenever $f$ preserves orientation, or
     \[
     f(a)\in[\hat b,b],\, f(b)\in [a,\hat a]
     \]
     whenever $f$ does not preserve orientation. Using Theorem \ref{teo:000000004}, for $f\in G_\cF$ such that $f(a)\in [a,b]$ we conclude that $f\in S_I$. Since $u_i(a)=1$, for any $x\in K_i$ we can find $f\in G_\cF$ such that $f(a)$ is arbitrarily close to $x$.
    Hence, we must have $(\hat a,\hat b)\cap K_i=\emptyset$. 
    
    Moreover, $\hat a,\hat b\in K_i$. Let us show that $\hat a\in K_i$, the proof of $\hat b\in K_i$ is analogous. Use the finiteness of the collection $\cA_i$, the definition of the weight map $u_i$ and that $u_i(a)=u_i(b)=1$ to conclude $K_i\cap [a,\hat a]\neq \emptyset$. Hence, if $\hat a\notin K_i$, then take $x\in [a,\hat a)\cap K_i$ such that $(x,\hat a)\cap K_i=\emptyset$. Now, by $\cF$-invariance of $K_i$, for all $f\in S_I^{+}$, $f(x)\in K_i$ and $f(a)\leq f(x)\leq \hat a$. But, by definition of $x$, $f(x)\leq x$. So that $f(a)\leq x<\hat a$ for all $f\in S_I^+$, which contradicts the definition of $\hat a.$ 

The Lemma is proven.
    \end{proof}

    \begin{remark}\label{rem1:coro:teo2}
        Case (ii) in Lemma \ref{lemma:forTeo2} can occur only when there exists a map in $S_I$ that does not preserve orientation.
        \end{remark}

    %%%%%%%%%%%%%%%%%%%%%%%%%%%%%%%%%%%%%%%%%%%%%%%%
\begin{proof}[Proof of Theorem \ref{teo3}]
%%%%%%%%%%%%%%%%%%%%%%%%%%%%%%%%%%%%%%%%%%%%%%%%
Consider $i\in\{1,\ldots,d\}$ and $\cA_i=\{I_1,\ldots,I_m\}$ as in Theorem \ref{teo:000000004}. Given $I_j=[a_j,b_j]\in \cA_i$, consider $A_j=[a_j,\hat a_j]$ and $B_j=[\hat b_j,b_j]$ for $\hat a_j$ and $\hat b_j$ are defined analogously to $\hat a$ and $\hat b$ in Lemma \ref{lemma:forTeo2} taking $a=a_j$ and $b=b_j$. Case $\hat a_j\geq\hat b_j$, set $\cB_{i,j}=\{I_j\}$. Case $\hat a_j<\hat b_j$, set $\cB_{i,j}=\{A_j,B_j\}$. Consider
\[
\cB_i=\cup_{j=1}^m\cB_{i,j}.
\]
By definition of $\cB_i$, item (c) holds. Items (b) and (e) are consequences of items (i) and (ii) of Lemma \ref{lemma:forTeo2}. Item (d) is a consequence from item (b) of Theorem \ref{teo:000000004} and (iii), (iv) of Lemma \ref{lemma:forTeo2}. 

To see item (a) it is enough to show that $\# \cB_{i,j}=\# \cB_{i,k}$ for all $j,k\in\{1,\ldots,m\}$. Assume that there exist $j,k$ such that $\# \cB_{i,j}=2$ and $\# \cB_{i,k}=1$.  Since $\# \cB_{i,j}=2$, we have $\hat a_j<\hat b_j$. Set $\delta=\mbox{dist}(\hat a_j, \hat b_j)>0$. Fix $x\in [a_j,\hat a_j]\subset [a_j,b_j]$ and $y\in [\hat b_j,b_j]\subset [a_j,b_j]$. By item (iv) of Lemma \ref{lemma:forTeo2}, we get $\mbox{dist}(g(x),g(y))\geq \delta>0$ for all $g$ preserving $I_j$. By item (b) of Theorem \ref{teo:000000004}, there exists $f\in G_\cF$ such that $f(I_j)\subset I_k$. From the proximal action of $G_\cF$ on $I_k$, we can find a sequence $(g_n)_{n\in\bN}$ in $G_\cF$ such that 
\[
\lim_{n\to\infty}\mbox{dist}(g_n(f(x)),g_n(f(y)))=0.
\]
Without loss of generality, let us assume that the sequences $(g_n(f(x)))_{n\in\bN}$ and $(g_n(f(y)))_{n\in\bN}$ converge to a point $z\in I_r$, for some $r\in\{1,\ldots,m\}$. Take $h\in\cF$ such that $h(I_r)\subset I_j$. Consider $V_z=h^{-1}(h(z)-\varepsilon,h(z)+\varepsilon)$ for some $\varepsilon\in(0,\delta/4)$. Take $N$ such that $g_N(f(x)),g_N(f(y))\in V_z$. Hence, $g=h\circ g_N\circ f$ preserves $I_j$. Further, $g(x),g(y)\in (h(z)-\varepsilon,h(z)+\varepsilon)$ which is a contradiction because $\mbox{dist}(g(x),g(y))>\delta>2\varepsilon$. Therefore, there is no $i,j$ such that $\# \cB_{i,j}\neq\# \cB_{i,k}$, which implies the desired.
\end{proof}

\begin{proof}[Proof of Corollary \ref{corollary:teo2}]
    Following the notation in the proof of Theorem \ref{teo3}. By Remark \ref{rem1:coro:teo2}, for all $i$ and $j$ we get that $\cB_{i,j}=\{I_j\}$. Therefore, for all $i=1,\ldots,d$ we have $\cB_i=\cA_i$. Applying Theorem \ref{teo3}, we conclude the desired.
\end{proof}

\section{The inverse RDS}\label{sec:d+ and d-}

Assume that $(\cF,\nu)$ is an RDS with $\cF\subset\mbox{Hom}(\bS^1 )$. Consider the \emph{inverse RDS} $(\cF^{-},\nu^{-})$ associated to $(\cF,\nu)$, i.e.,
\[
\cF^-= \{f^{-1}\colon f\in\cF\},
\]
and
\[
\nu^-(\cdot)= \nu(\{f\colon f^{-1}\in \cdot\}).
\]
Also, we assume there is no finite orbit for $\cF $. Hence, no finite orbit exists for $\cF^{-}$.

Let us begin the discussion on the number of stationary measures of $(\cF,\nu)$ and $(\cF^{-},\nu^{-})$ with an example illustrating that such quantities do not necessarily coincide.
\begin{figure}[h]
    \centering
    \includegraphics[width=0.5\textwidth]{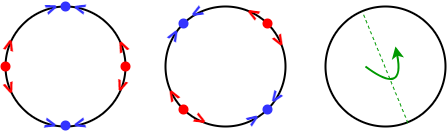}
    \caption{Dynamics on the circle of the functions $f_1$, $f_2$ and $f_3$ in Example \ref{ex1} respectively.}
    \label{fig:ex-1}
\end{figure}
\begin{example}\label{ex1}
Consider $\cF=\{f_1,f_2,f_3\}\subset\mbox{Hom}(\bS^1 )$ where the maps $f_i$'s are defined as follow:
\begin{itemize}
    \item $f_1$ is a preserving orientation homeomorphism with only 4 fixed points $0$, $1/4$, $1/2$ and $3/4$. Where $0$ and $1/2$ are attracting points; and, $1/4$ and $3/4$ are repelling points.
    \item $f_2$ is a preserving orientation homeomorphism with only 4 fixed points $1/8$, $3/8$, $5/8$ and $7/8$. Where $3/8$ and $7/8$ are attracting points; and, $1/8$ and $5/8$ are repelling points.
    \item $f_3$ is a reflection map on the line that passes through the points $7/16$ and $15/16$. So $f_3$ is an isometry fixing only $7/16$ and $15/16$. Hence, the inverse map $f_3^{-1}$ of  $f_3$ is $f_3^2=f_3\circ f_3$. Further, $f_3(0)=7/8$, $f_3(1/4)=5/8$, $f_3(1/2)=3/8$ and $f_3(3/4)=1/8$.
\end{itemize}
We are consider $\bS^1=[0,1)$.
Since each point fixed by $f_1$ is different from any point fixed by $f_2$, there is no probability measure $m$ on $\bS^1$ such that $f_{\ast}m=m$ for all $f\in \cF$ (and so for all $f\in\cF^-$). Note that
$f_i([3/4,1/8])\subset [3/4,1/8]$ and $f_i([1/4,5,8])\subset [1/4,5,8]$, for all $i=1,2,3$. Therefore, it is not difficult to conclude that there are exactly $d_+=2$ invariant minimal sets for $\cF$. On the other hand, using the dynamics of the maps in $\cF$, we have that $\cF^-=\{f_1^{-1},f_2^{-1},f_3^{-}\}$ has a unique minimal set contained in $[1/2,7/8]\cup[0,3/8]$. Let $d_-=1$. Then, For any probability measure $\nu$ supported in $\cF$ (i.e., with $\nu(\{f_i\})>1$ for $i=1,2,3$), the number of ergodic stationary measures for $(\cF,\nu)$ is $d_+=2$. However, if we consider $\nu^-$ by $\nu^-(\{f_i^{-1}\})=\nu(\{f_i\})$, we have that the number of ergodic stationary measures for $(\cF^{-1},\nu^{-1})$ is $d_-=1$.
\end{example}

%%%%%%%%%%%%%%%%%%%%%%%%%%%%%%%%%%%%%%%%%%%%%%%%%%%%%%%%%%%%%%%%%%%%%%%%%%%%%%%%%%%
%%%%%%%%%%%%%%%%%%%%%%%%%%%%%%%%%%%%%%%%%%%%%%%%%%%%%%%%%%%%%%%%%%%%%%%%%%%%%%%%%%%%%%
%%%%%%%%%%%%%%%%%%%%%%%%%%%%%%%%%%%%%%%%%%%%%%%%%%%%%%%%%%%%%%%%%%%%%%%%%%%%%%%%%%%%%%
%%%%%%%%%%%%%%%%%%%%%%%%%%%%%%%%%%%%%%%%%%%%%%%%%%%%%%%%%%%%%%%%%%%%%%%%%%%%%%%%%%%%%

Let $d_+$ and $d_-$ denote the number of ergodic stationary measures for $(\cF,\nu)$ and $(\cF^-,\nu^-)$ respectively.
To relate the quantities $d_+$ and $d_-$, we will use Theorems \ref{teo2} and \ref{teo3}. 
First, let us establish some preliminary results.

\begin{lemma}\label{lemma:Cij(n+2)}
    Let $(\cF,\nu)$ be an RDS of circle homeomorphisms without finite orbits. Let $(\cF^{-},\nu^{-})$ be the inverse RDS. Let $d_+$ and $d_-$ be the numbers of ergodic stationary measures associated to $(\cF,\nu)$ and $(\cF^{-},\nu^{-})$ respectively. If $d_+=n+2$ then $d_-\geq n+1$.
\end{lemma}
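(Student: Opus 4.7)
The plan is to reduce the statement to a combinatorial question about minimal invariant subsets of a finite family of ``gap'' arcs complementary to the $\cF$-invariant intervals produced by Theorem \ref{newteo:1}, and then bound this count below by $n+1$ via an Eulerian graph argument. I would first apply Theorem \ref{newteo:1} to $(\cF,\nu)$ to obtain pairwise disjoint closed $\cF$-invariant sets $L_1,\ldots,L_{n+2}$, each a finite union of closed arcs containing the corresponding $\cF$-minimal set $K_i$. With $L=\bigcup_i L_i$, let $W_1,\ldots,W_M$ be the connected components of $\bS^1\setminus L$. The $\cF$-invariance of $L$ and the fact that each $f^{-1}$ is a homeomorphism force $f^{-1}(W_k)$ to be a connected subset of $\bS^1\setminus L$, hence contained in a single $W_{\phi_f(k)}$; this defines a semigroup action $\phi$ of $\{\phi_f:f\in\cF\}$ on $\{1,\ldots,M\}$.

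Next, I would establish a bijection between $\cF^-$-minimal sets and minimal $\phi$-invariant subsets of $\{1,\ldots,M\}$. The key input is that every $\cF^-$-minimal set lies in $\overline{W}$: this follows from Theorem \ref{teo3}, since the proximal contraction of $G_\cF$ on each $\cB_i$-interval becomes expansion under $\cF^-$, ejecting any orbit out of $L$. The support $S(\mathcal{M})=\{k:\mathcal{M}\cap\overline{W}_k\neq\emptyset\}$ of a $\cF^-$-minimal set $\mathcal{M}$ is $\phi$-invariant and, by minimality of $\mathcal{M}$, is a minimal $\phi$-invariant subset; conversely each minimal $\phi$-invariant subset $S$ yields the closed $\cF^-$-invariant set $\overline{\bigcup_{k\in S}W_k}$, which must contain at least one $\cF^-$-minimal set. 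Distinct minimal sets have disjoint supports, so $d_-$ equals the number of minimal $\phi$-invariant subsets of $\{1,\ldots,M\}$.

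The combinatorial heart of the proof: label each component of $L$ by its color $i\in\{1,\ldots,n+2\}$, so that the cyclic arrangement gives a sequence $c_1,\ldots,c_M$ with consecutive entries distinct and every color represented, and each gap $W_k$ carries the ordered type $(c_k,c_{k+1})$. Viewing types as directed edges on the vertex set $\{1,\ldots,n+2\}$ produces a strongly connected Eulerian multigraph. The action $\phi_f$ is type-preserving for $f$ orientation-preserving and type-reversing for $f$ orientation-reversing, and the mere presence of an orientation-reversing $f\in\cF$ forces the edge set to be closed under reversal. If $\cF\subset\mbox{Hom}^+(\bS^1)$, the underlying simple digraph on $n+2$ vertices has at least $n+2$ distinct edges, yielding at least $n+2$ pairwise disjoint $\phi$-invariant type classes; otherwise the underlying undirected simple graph is connected on $n+2$ vertices and thus has at least $n+1$ edges, yielding at least $n+1$ pairwise disjoint $\phi$-invariant unordered-type classes. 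In either case each class is $\phi$-invariant and nonempty, so contains at least one minimal $\phi$-invariant subset, giving $d_-\geq n+1$.

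The main obstacle will be rigorously justifying the type-preservation/reversal property of $\phi_f$, since we only know $f(K_i)\subset K_i$ rather than $f(K_i)=K_i$; the case $f^{-1}(W_k)\subsetneq W_{\phi_f(k)}$ requires careful tracking of how the endpoints of $W_k$ map under $f^{-1}$ (they may land in the interior of the enclosing gap rather than at its endpoints). The local contracting structure from Theorem \ref{teo3} should play a key role here, perhaps by showing that the union of all gaps of a given (unordered) type is forced to be $\phi$-invariant even in the non-surjective case.
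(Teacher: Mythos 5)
Your overall strategy is the paper's: group the complementary gaps between the invariant interval families by the (unordered) pair of minimal sets they separate, show each group is $\cF^-$-invariant and hence contains an $\cF^-$-minimal set, and count at least $n+1$ groups because the circular arrangement makes the ``neighbor graph'' on $n+2$ vertices connected. The paper phrases the count via Lemma \ref{lemma:cV} and $\cF$-paths rather than edges of a connected graph, and works with the families $\cA_i$ of Theorem \ref{teo:000000004} rather than the $L_i$ of Theorem \ref{newteo:1}, but that is the same combinatorics.

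Two points, however. First, the step you yourself flag as ``the main obstacle'' --- that $f^{-1}$ sends the union of gaps of a given unordered type into itself --- is the load-bearing step, and your proposed route to it (proximality from Theorem \ref{teo3}) is not the right tool and is not how it closes. The paper gets it purely topologically: by item (b) of Theorem \ref{teo:000000004} every interval of $\cA_i$ contains an $f$-image of every other, and by Lemma \ref{lemma:00003} adjacency of an $\cA_i$-interval and an $\cA_j$-interval is preserved, so each gap in $C_{i,j}$ contains $f(I)$ for some $I\in C_{i,j}$; equivalently $f^{-1}\bigl(\cup_{I\in C_{i,j}}I\bigr)\subset\cup_{I\in C_{i,j}}I$. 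No contraction or orientation case analysis is needed, and in particular the orientation-reversing case does not have to be treated separately once one works with unordered pairs. Second, your asserted bijection between $\cF^-$-minimal sets and minimal $\phi$-invariant index sets rests on the claim that every $\cF^-$-minimal set lies in $\overline{W}$ because ``proximal contraction becomes expansion under $\cF^-$, ejecting any orbit out of $L$.'' That inference is not justified (proximality of $G_\cF$ on an interval says nothing directly about where $\cF^-$-orbits accumulate), and it is also unnecessary: for the lower bound you only need the easy direction, namely that each of the $n+1$ pairwise disjoint closed $\cF^-$-invariant gap-unions contains at least one $\cF^-$-minimal set. I would drop the bijection entirely and replace the proximality argument for invariance by the topological one above.
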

    Before proving Lemma \ref{lemma:Cij(n+2)}, let us introduce some useful sets. For $i,j\in\{1,\ldots,d_+\}$, $i\neq j$, let $C_{i,j}$ be the set of closed intervals $I$ with one of its extremal points in $\cup_{J\in \cA_i}J$ and the other extremal point in $\cup_{J\in \cA_j}J$, such that the interior of $I$ h does not intersect $\cup_{J\in \cup_k\cA_k}J$. Note that $C_{i,j}=C_{j,i}$ and that it is a finite set. Furthermore, $C_{i,j}\neq\emptyset$ whenever $i$ and $j$ are neighbors. 

\begin{remark}\label{remark:Cij(n+2)}
By (b) in Theorem \ref{teo:000000004}, for all $I,J\in C_{i,j}$ there exists $f\in\cF$ such that $f^{-1}(I)\subset J$ (or, $I\subset f(J)$). In other words, the $\cF$-invariance of the sets $\cA_i$'s implies the $\cF^{-1}$-invariance of the sets $C_{i,j}$'s. Hence,
\[
f^{-1}\left( \cup_{I\in C_{i,j}}I\right)\subset \cup_{I\in C_{i,j}}I,
\]
for all $f\in G_\cF$. Then, for each $(i,j)\in\cV$, $i\neq j$, there exists a closed $\cF^-$-minimal set $F\subset C_{i,j}$.
\end{remark}

\begin{proof}[Proof of Lemma \ref{lemma:Cij(n+2)}]

Assume $d_+=n+2$. By Lemma \ref{lemma:cV}, then 
\[
\#\{i\in \{1,\ldots,d_+\}\colon \#V_i=2\}\geq n.
\]
Hence, there are $n$ distinct points $i_1,\ldots,i_n,$ in \(\{1,\ldots,d_+\}\) such that $V_{i_k}=2$, for $k=1,\ldots,n$. Since $d_+=n+2>n$, we have that any $n$-cycle \footnote{A $n$-cycle is a path of the form $[j_0,j_1,\ldots,j_n]$ with $j_0=j_n$ and $j_k\neq j_{\ell}$ for $k,\ell=1,\ldots,n$ and $\ell\neq k$.} is not a $\cF$-path. Therefore, we can find $i_{n+1},j_1,\ldots,j_n,j_{n+1}\in\{1,\ldots,d_+\}$ such that $(i_1,j_1),\ldots,(i_{n+1},j_{n+1})\in\cV$ such that for $k\neq \ell$
\[
i_k\neq j_k\quad \mbox{and}\quad\{i_k,j_k\}\neq \{i_\ell,j_\ell\}.
\]
For $k=1,\ldots,n+1$, consider the set $C_{i_k,j_k}$ as defined above Remark \ref{remark:Cij(n+2)}.
Let $C_k=C_{i_k,j_k}$. Note that for $k\neq \ell$, $C_k\neq C_\ell$ and so
\[
\left(\cup_{I\in C_k}I\right)\cap\left(\cup_{J\in C_\ell}J\right)=\emptyset.
\]
By Remark \ref{remark:Cij(n+2)}, for $k=1,\ldots,n+1$, there exists a closed $\cF^-$-minimal set $F_k\subset C_k$. Therefore, there exist at least $n+1$ closed $\cF^-$-minimal sets and so $d_-\geq n+1$.     
\end{proof}

\begin{lemma}\label{lemma:d+=d-}
    Let $(\cF,\nu)$ be an RDS of circle homeomorphisms without finite orbits. Let $(\cF^{-},\nu^{-})$ be the inverse RDS. Let $d_+$ and $d_-$ be the numbers of ergodic stationary measures associated to $(\cF,\nu)$ and $(\cF^{-},\nu^{-})$ respectively. If $\cF\subset\mbox{Hom}^+(\bS^1 )$, then $d_-\geq d_+$.
\end{lemma}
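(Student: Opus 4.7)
The plan is to exploit the orientation-preservation to endow $\cF$ with a cyclic shift structure on the connected components of $U := \bigcup_{i=1}^{d_+} \{u_i = 1\}$, and then transfer this structure to the complementary gaps, where the $\cF^-$-invariant sets naturally live. The case $d_+ = 1$ is trivial, so assume $d_+ \geq 2$. For each $i$, take $\cA_i$ as in Theorem \ref{teo:000000004}, label the connected components of $U$ in counterclockwise cyclic order as $I_1, \ldots, I_M$ (with $M = \sum_i \# \cA_i$), define $r(k) \in \{1, \ldots, d_+\}$ by $I_k \in \cA_{r(k)}$, and let $G_k$ denote the closed gap between $I_k$ and $I_{k+1}$ (indices mod $M$).

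\textbf{Step 1 (Shift structure).} I first show that each $f \in \cF$ induces a shift $s_f \in \bZ / M\bZ$ with $f(I_k) \subset I_{k + s_f}$ for every $k$. The argument from Lemma \ref{lemma:00002}, reused in the proof of Theorem \ref{teo:000000004}(b), gives that $f$ restricted to $\cA_i$ is a bijection of $\cA_i$ onto itself for every $i$; combining these produces a bijection $\sigma \colon \{1, \ldots, M\} \to \{1, \ldots, M\}$ with $f(I_k) \subset I_{\sigma(k)}$. Since $f$ preserves orientation on $\bS^1$, $\sigma$ preserves the cyclic order on $\bZ/M\bZ$, so $\sigma$ is a rotation $\sigma(k) = k + s_f$. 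By construction, $r(k + s_f) = r(k)$ for all $k$.

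\textbf{Step 2 (Orbits on intervals).} Let $S := \langle s_f : f \in \cF \rangle \subseteq \bZ/M\bZ$. Every $s \in S$ preserves $r$-labels, so each $\cA_i$ (viewed as an index set) is $S$-stable. Theorem \ref{teo:000000004}(b) gives, for any $I, J \in \cA_i$, some $f \in \cF$ with $f(I) \subset J$, which provides an element of $S$ taking the index of $I$ to that of $J$. Hence $S$ acts transitively on each $\cA_i$, so each $\cA_i$ is a single $S$-orbit of common size $\ell := |S|$, and the number of $S$-orbits on $\{1, \ldots, M\}$ is exactly $M/\ell = d_+$.

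\textbf{Step 3 (Transfer to gaps).} The same subgroup $S$ acts on gap indices and partitions them into $d_+$ orbits $O_1, \ldots, O_{d_+}$. Fix $f \in \cF$ and $k$: since $f(I_k) \subset I_{k + s_f}$, $f(I_{k+1}) \subset I_{k+1+s_f}$, and $f$ is orientation-preserving, the interval $f(G_k)$ runs counterclockwise from a point of $I_{k+s_f}$ to a point of $I_{k+1+s_f}$, so $f(G_k) \supset G_{k + s_f}$. Applying $f^{-1}$ gives $f^{-1}(G_{k+s_f}) \subset G_k$, i.e., $f^{-1}(G_m) \subset G_{m - s_f}$ for every $m$. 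Setting $H_j := \bigcup_{k \in O_j} G_k$, for $m \in O_j$ one has $m - s_f \in O_j$ (since $s_f \in S$), so $f^{-1}(G_m) \subset G_{m - s_f} \subset H_j$. Thus each $H_j$ is a nonempty closed $\cF^-$-invariant subset of $\bS^1$, and the $H_j$'s are pairwise disjoint. Each $H_j$ therefore contains at least one closed $\cF^-$-minimal set, yielding $d_+$ distinct $\cF^-$-minimal sets, i.e.\ $d_- \geq d_+$.

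The main obstacle is Step 1: upgrading the bijectivity of $f$ on each individual $\cA_i$ to a genuine \emph{shift} on the combined cyclic set $\{I_1, \ldots, I_M\}$. This is where orientation-preservation is essential, since a cyclic-order-preserving bijection of $\bZ/M\bZ$ is forced to be a rotation. Once $\cF$ acts by shifts through $S$, the same $S$ acts by shifts on the gap indices, and the passage to $\cF^-$-invariant unions of gaps becomes formal.
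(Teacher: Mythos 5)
Your proof is correct, and its engine is the same as the paper's: both arguments produce $d_+$ pairwise disjoint, closed, $\cF^-$-invariant unions of gaps between consecutive connected components of $\bigcup_i\{u_i=1\}$, using item (b) of Theorem \ref{teo:000000004} together with orientation-preservation, and then extract one $\cF^-$-minimal set from each union. The difference lies in how the gaps are grouped. The paper defines, for each ordered pair $(i,j)$, the collection $C_{[i,j]}$ of gaps running counterclockwise from an interval of $\cA_i$ to an interval of $\cA_j$, notes that $\bigcup_{I\in C_{[i,j]}}I$ is $\cF^{-1}$-invariant, and chooses for each $i$ one neighbor $j_i$ to get $d_+$ disjoint invariant sets. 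You instead group gaps by orbits of the shift subgroup $S\leq\bZ/M\bZ$; since $r(k+s)=r(k)$ and $r(k+1+s)=r(k+1)$ for $s\in S$, each orbit $O_j$ sits inside the index set of some $C_{[i,j]}$, so your partition refines (or equals) the paper's, and your transitivity argument in Step 2 pins the orbit count at exactly $d_+$. What your route buys is the explicit rotation structure --- the assignment $f\mapsto s_f$ and the identity $\#\cA_i=|S|$ for every $i$, which is precisely the ``same number of connected components'' assertion of Theorem \ref{newteo:1} and is more than the inequality $d_-\geq d_+$ requires. What the paper's route buys is brevity, and its unordered variant $C_{i,j}$ is what carries over to the non-orientation-preserving case in Lemma \ref{lemma:Cij(n+2)}. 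One point worth making explicit in your Step 3: the closed gaps $G_k$ are pairwise disjoint because no interval of any $\cA_i$ is a single point (a degenerate $J\in\cA_i$ together with item (b) of Theorem \ref{teo:000000004} and injectivity of the maps would force every interval of $\cA_i$ to be a point, contradicting that the infinite set $K_i$ is covered by the finite collection $\cA_i$); and even if two $H_j$'s met in finitely many points, the minimal sets they contain would still be distinct since no minimal set is finite.
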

    Let us introduce a variation of the sets $C_{i,j}$'s, now we will respect the order of the indices. For $i,j\in\{1,\ldots,d_+\}$, $i\neq j$, let $C_{[i,j]}$ be the set of closed intervals $I=[a,b]$ with $a\in\cup_{J\in \cA_i}J$ and $b\in\cup_{J\in \cA_j}J$, such that $(a,b)$ has no points of $\cup_{J\in \cup_k\cA_k}J$. Note that $C_{[i,j]}\neq C_{[j,i]}$ (provided that one of the two is non-empty) and $C_{[i,j]}$ is a finite set. Further, $C_{[i,j]}\neq\emptyset$ whenever $[i,j]$ is $\cF$-path. By (b) in Theorem \ref{teo:000000004}, 
\[
f^{-1}\left( \cup_{I\in C_{[i,j]}}I\right)\subset \cup_{I\in C_{[i,j]}}I,
\]
for all $f\in G_\cF\subset\mbox{Hom}^{+}(\bS^1 )$. Hence, for each $\cF$-path $[i,j]$ there exists a closed $\cF^-$-minimal set $F\subset C_{[i,j]}$.

\begin{proof}[Proof of Lemma \ref{lemma:d+=d-}]

If $d_+=1$, it is clear that $d_-\geq d_+$. Assume $d_+\geq 2$. For all $i=1,\ldots,d_+$, take $j_i\in\{1,\ldots,d_+\}\backslash\{i\} $ such that $[i,j_i]$ is an $\cF$-path and so $(i,j_i)\in\cV$. As we discussed above, we have for each $\cF$-path $[i,j_i]\in\cV$ there exists a closed $\cF^-$-minimal set $F_i\subset C_{[i,j_i]}$. Hence, there exist at least $d_+$ closed $\cF^-$-minimal sets and so $d_-\geq d_+$.
\end{proof}

Now we are ready to prove Theorem \ref{teo2}.
\begin{proof}[Proof of Theorem \ref{teo2}]
By Lemma \ref{lemma:Cij(n+2)}, $d_-\geq d_+-1$. Using that $(\cF^-)^-=\cF$, we also get $d_+\geq d_--1$. Consequently, $d_++1\geq d_-\geq d_+-1$ and hence $|d_+-d_-|\leq 1$.

Now, assume $\cF\subset\mbox{Hom}^+(\bS^1 )$. Since $(\cF^-)^-=\cF$, by Lemma \ref{lemma:d+=d-}, $d_-\geq d_+$ and $d_+\geq d_-$. Therefore $d_-=d_+$.    
\end{proof}

\bibliographystyle{alpha}
\bibliography{bib}

\begin{thebibliography}{{Ant}84}

\bibitem[{Ant}84]{Anton:1984}
V.~A. {Antonov}.
\newblock {Modeling cyclic evolution processes: Synchronization by means of a
  random signal}.
\newblock {\em Leningradskii Universitet Vestnik Matematika Mekhanika
  Astronomiia}, pages 67--76, April 1984.

\bibitem[DKN07]{DerKleNav:07}
Bertrand Deroin, Victor Kleptsyn, and Andr\'es Navas.
\newblock Sur la dynamique unidimensionnelle en r\'egularit\'e{}
  interm\'ediaire.
\newblock {\em Acta Math.}, 199(2):199--262, 2007.

\bibitem[Fur63]{Fur:63}
Harry Furstenberg.
\newblock Noncommuting random products.
\newblock {\em Trans. Amer. Math. Soc.}, 108:377--428, 1963.

\bibitem[Fur67]{Fur:67}
Harry Furstenberg.
\newblock Disjointness in ergodic theory, minimal sets, and a problem in
  {D}iophantine approximation.
\newblock {\em Math. Systems Theory}, 1:1--49, 1967.

\bibitem[KKO18]{2018:KlepKudOku}
Victor Kleptsyn, Yury Kudryashov, and Alexey Okunev.
\newblock Classification of generic semigroup actions of circle
  diffeomorphisms, 2018.

\bibitem[Mal17]{Mal:17}
D.~Malicet.
\newblock Random walks on {${\rm Homeo}(S^1)$}.
\newblock {\em Comm. Math. Phys.}, 356(3):1083--1116, 2017.

\bibitem[Nav11]{Nav:11}
Andr\'es Navas.
\newblock {\em Groups of circle diffeomorphisms}.
\newblock Chicago Lectures in Mathematics. University of Chicago Press,
  Chicago, IL, spanish edition, 2011.

\end{thebibliography}
\end{document}